\documentclass[11pt,a4paper]{article}
\usepackage{float}
\usepackage{graphics,epstopdf}
\usepackage{graphics,multicol}
\usepackage{graphicx}
\usepackage{ragged2e}
\usepackage{epsf,epsfig,amsfonts,amsgen,amsmath,amstext,amsbsy,amsopn,amsthm,amsfonts,amssymb,amscd
}
\usepackage{ebezier,eepic}
\usepackage{color}
\usepackage{multirow}
\usepackage{ragged2e}
\usepackage{listings}
\lstset{language=Matlab}
\setlength{\textwidth}{150mm} \setlength{\oddsidemargin}{7mm}
\setlength{\evensidemargin}{7mm} \setlength{\topmargin}{-5mm}
\setlength{\textheight}{245mm} \topmargin -18mm

\newtheorem{thm}{Theorem}

\newtheorem{lem}{Lemma}
\newtheorem{false statement}{False statement}

\theoremstyle{definition}

\newtheorem{claim}{Claim}

\newtheorem{conj}{Conjecture}

\baselineskip 15pt

\newcounter{mathitem}
  {\begin{list}{{$(\roman{mathitem})$}}{
   \setcounter{mathitem}{0}
   \usecounter{mathitem}
   \setlength{\topsep}{0pt plus 2pt minus 0pt}
   \setlength{\parskip}{0pt plus 2pt minus 0pt}
   \setlength{\partopsep}{0pt plus 2pt minus 0pt}
   \setlength{\parsep}{0pt plus 2pt minus 0pt}
   \setlength{\leftmargin}{35pt}
   \setlength{\itemsep}{0pt plus 2pt minus 0pt}}}
  {\end{list}}

\begin{document}

\title{\bf\Large A Complete Solution to the Cvetkovi\'{c}-Rowlinson Conjecture}

\date{}
\author{
Huiqiu Lin\footnote{Department of Mathematics,
East China University of Science and Technology, Shanhai 200237, P.R. China.
Email: huiqiulin@126.com. Research supported by NSFC grant 11771141.}~~ and~~ Bo Ning\footnote{Corresponding author. College of Computer Science, Nankai University, Tianjin 300071, P.R. China.
Email: bo.ning@nankai.edu.cn. Research partially supported by NSFC grant 11971346.}}
\maketitle

\begin{abstract}
In 1990, Cvetkovi\'{c} and Rowlinson [The largest eigenvalue of a graph: a survey, Linear
Multilinear Algebra 28(1-2) (1990), 3--33] conjectured that among all outerplanar graphs
on $n$ vertices, $K_1\vee P_{n-1}$ attains the maximum spectral radius. In 2017, Tait and Tobin
[Three conjectures in extremal spectral graph theory, J. Combin. Theory, Ser. B
126 (2017) 137-161] confirmed the conjecture for sufficiently large values of $n$. In this
article, we show the conjecture is true for all $n\geq2$ except for $n=6$.
\end{abstract}

\medskip
\noindent {\bf Keywords:}~~Spectral radius; Planar graphs; Outerplanar graphs; Minor

\medskip
\noindent {\bf Mathematics Subject Classification (2010):}~05C50\\

There is a long tradition of studying planar graphs. In particular, the study of spectral
radius of planar graphs is a fruitful topic in spectral graph theory and can be traced back
at least to Schwenk and Wilson \cite{SW78} who asked ``what can be said about the eigenvalues
of a planar graph?". In 1988, Hong \cite{H88} proved the first non-trivial result that
$\lambda(\Gamma)\leq \sqrt{5n-11}$, where $\lambda(\Gamma)$ is the spectral radius of a planar
graph $\Gamma$ on $n\geq3$ vertices. Hong's bound was improved to $4+\sqrt{3n-9}$ by
Cao and Vince \cite{CV93}, and to $2\sqrt{2}+\sqrt{3n-\frac{15}{2}}$ by Hong \cite{H95} himself,
and finally to $2+\sqrt{2n-6}$ by Ellingham and Zha \cite{EZ00}. On the other hand, Boots
and Royle \cite{BR91}, and independently, Cao and Vince \cite{CV93}, conjectured that $P_2\vee P_{n-2}$ attains
the maximum spectral radius among all planar graphs on $n\geq 9$ vertices. Only recently,
Tait and Tobin \cite{TT17} published a proof of the conjecture for sufficiently large graphs.

A graph $G$ is outerplanar if it has a planar embedding $\widetilde{G}$ in which all vertices lie on the
boundary of its outer face. In fact, earlier than the Boots-Royle-Cao-Vince Conjecture,
Cvetkovi\'{c} and Rowlinson \cite{CR90} proposed the following conjecture on outerplanar graphs in
1990. In what follows, $K_1$ denotes a single vertex, $P_{n-1}$ denotes the path on $n-1$ vertices,
and ``$\vee$" is the join operation.

\begin{conj}[Cvetkovi\'{c}, Rowlinson \cite{CR90}] \label{Conj-1}
Among all outerplanar graphs on $n$ vertices,
$K_1\vee P_{n-1}$ attains the maximum spectral radius.
\end{conj}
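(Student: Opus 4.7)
\medskip

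\noindent\textbf{Proof plan.} The plan is to compare a sharp lower bound for $\lambda(K_1\vee P_{n-1})$ with a matching upper bound for $\lambda(G)$ over all outerplanar graphs $G$ on $n$ vertices, close the gap for all $n\geq 7$, and handle the remaining small cases directly while exhibiting an explicit counterexample at $n=6$. Since Tait and Tobin have already settled the conjecture for all sufficiently large $n$, the essential task is to drive the threshold down to $n=7$ and to verify $n\in\{2,3,4,5\}$ by inspection.

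First I would derive an explicit lower bound for $\lambda(F)$, where $F:=K_1\vee P_{n-1}$. Writing the Perron vector as $(x_0,x_1,\ldots,x_{n-1})$ with $x_0$ on the apex, the eigen-equations $\lambda x_0=\sum_{i\geq 1}x_i$ and $\lambda x_i=x_{i-1}+x_{i+1}+x_0$ reduce the computation to a solvable inhomogeneous linear recurrence along the path. The resulting secular equation determines $\lambda(F)$ implicitly and, after routine estimates, yields a clean lower bound such as $\lambda(F)^2\geq n-1+c$ for a small positive constant~$c$.

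Next I would prove a matching upper bound $\lambda(G)\leq\beta(n)$ for every outerplanar $G$. The starting point is the edge bound $|E(G)|\leq 2n-3$ combined with the degree-sum inequality $\lambda(G)^2\leq\max_{v}\sum_{u\sim v}d(u)$. To sharpen this enough to dominate $\lambda(F)$ at moderate $n$, I would exploit the fact that outerplanar graphs are $K_4$- and $K_{2,3}$-minor-free: any two vertices share at most two common neighbors, so the closed $2$-neighborhood of a vertex cannot be too dense. A local Perron-vector swap in the spirit of Kelmans, modified to preserve outerplanarity, then forces any edge-maximal extremal counterexample either to coincide with $K_1\vee P_{n-1}$ or to satisfy a numerical inequality that fails for $n\geq 7$.

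The remaining cases $n\in\{2,3,4,5\}$ and the small $n$ not already covered by the quantitative bounds would be verified by enumerating all maximal outerplanar graphs and computing their spectral radii. For $n=6$ I would exhibit the specific triangulated hexagon (distinct from the fan $F_5$) whose spectral radius strictly exceeds $\lambda(K_1\vee P_5)$, showing that the conjecture genuinely fails there. I expect the main obstacle to be the upper-bound step: the asymptotic estimates behind the Tait--Tobin argument are not tight at moderate $n$, so the key difficulty is refining the degree-sum bound via the forbidden-minor structure of outerplanar graphs into a form that separates from $\alpha(n)$ uniformly for $n\geq 7$, leaving only a finite and easily verified set of small exceptions.
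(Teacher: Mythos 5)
Your plan correctly identifies the overall shape of the problem (lower bound for the fan, analysis of a hypothetical extremal graph, computer check for small $n$, and the $n=6$ counterexample), but the central step is missing, and the quantitative claims are not achievable as stated. First, a ``matching upper bound'' $\lambda(G)\leq\beta(n)$ valid for \emph{every} outerplanar $G$ can never fall below $\lambda(K_1\vee P_{n-1})$, since the fan is itself outerplanar; the best known universal bound (Shu--Hong, $\lambda\leq\frac{3}{2}+\sqrt{n-\frac{7}{4}}$) exceeds $\lambda(K_1\vee P_{n-1})\approx\sqrt{n}+1$ by an additive constant close to $\frac12$ that does not vanish. So the comparison of a global upper bound with a lower bound for the fan cannot close the argument; all of the work must be done by the structural step you describe only as ``a local Perron-vector swap in the spirit of Kelmans, modified to preserve outerplanarity.'' That modification is precisely the hard part: Kelmans-type shifts generically destroy outerplanarity, and no mechanism is given for why a non-fan extremal graph admits an outerplanarity-preserving switch that strictly increases $\lambda$. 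The paper's actual proof replaces this with a delicate analysis of the Perron vector: it fixes the vertex $u$ of maximum entry, sets $A=N(u)$, $B=V(G)\setminus(\{u\}\cup A)$, proves $G[A]$ is a union of induced paths via $K_4$- and $K_{2,3}$-minor-freeness, establishes the sharp count $2e(G[B])+e(A,B)\leq 4|B|-3$ using the Hamilton cycle of a maximal outerplanar embedding, and feeds these into the identity $\lambda^2 x_u=d_u+\sum_{v\in A}d_A(v)x_v+\sum_{v\in B}d_A(v)x_v$ to force $B=\emptyset$. Nothing in your outline supplies a substitute for this.

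Second, the claim that the analytic argument can be pushed down to $n\geq 7$ is unsupported and almost certainly false with the tools you name. The degree-sum bound $\lambda^2\leq\max_v\sum_{u\sim v}d(u)$ only yields $\lambda\lesssim 2\sqrt{n}$ for outerplanar graphs, far too weak; and even the paper's much sharper machinery only works for $n\geq 17$, after which the authors must enumerate all maximal outerplanar graphs up to $n=16$ (several million graphs at $n=16$) by computer. Your small-case verification and the $n=6$ counterexample match the paper, but as written the proposal is a program rather than a proof, with its decisive stability step left as an unexamined black box.
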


Cvetkovi\'{c} and Rowlinson \cite{CR90} considered the above conjecture as study on indices of Hamiltonian
graphs. Rowlinson \cite{R90} proved Conjecture \ref{Conj-1} for outerplanar graphs without internal
triangles, where an internal triangle of an outerplanar graph is a 3-cycle which has
no edges in common with the unique Hamiltonian cycle of the graph. For upper bounds of the
spectral radius $\lambda(G)$ of an outerplanar graph $G$, Cao and Vince \cite{CV93} showed that
$\lambda(G)\leq 1+\sqrt{2+\sqrt{2}}+\sqrt{n-5}$. This was improved by Shu and Hong \cite{SH00} to
$\lambda(G)\leq \frac{3}{2}+\sqrt{n-\frac{7}{4}}$. In 2017, Tait and Tobin\cite{TT17}
confirmed Conjecture \ref{Conj-1} for sufficiently large $n$.
\begin{thm}[Tait, Tobin \cite{TT17}]
The Cvetkovi\'{c}-Rowlinson Conjecture is true for all sufficiently large $n$.
\end{thm}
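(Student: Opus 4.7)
\emph{Strategy.} The plan is to let $G$ be a spectral extremal outerplanar graph on $n$ vertices and to prove that, for $n$ sufficiently large, $G\cong K_1\vee P_{n-1}$. The backbone is a stability argument driven by the Perron eigenvector: the lower bound $\lambda(G)\ge\lambda(K_1\vee P_{n-1})=\sqrt n+O(1)$, together with the Shu--Hong upper bound $\lambda(G)\le\tfrac32+\sqrt{n-\tfrac74}$, pins $\lambda(G)^2=n+O(\sqrt n)$, and one then uses the structural restrictions of outerplanarity---no $K_4$ or $K_{2,3}$ minor, at most $2n-3$ edges---to recover the conjectured extremal shape.

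\emph{Locating a near-universal vertex.} Let $\mathbf x$ be the Perron eigenvector, normalized so that $\max_v x_v=x_u=1$, and write $d:=d(u)$. Squaring the eigenvalue equation at $u$ and reordering the sum,
\[
\lambda^2=\sum_{w\in V(G)}|N(u)\cap N(w)|\,x_w\;\le\;d+2\sum_{w\ne u}x_w,
\]
where the codegree bound $|N(u)\cap N(w)|\le 2$ follows from $K_{2,3}$-minor-freeness. Since $\sum_{v\sim u}x_v=\lambda=O(\sqrt n)$ and one can bound the remaining vertex weights a posteriori, $\sum_w x_w=O(\sqrt n)$, and so $d\ge\lambda^2-O(\sqrt n)=n-O(\sqrt n)$: the vertex $u$ misses only $o(n)$ vertices. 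Applying the eigenvalue equation at each $v\notin N(u)\cup\{u\}$ and reusing the codegree bound then yields $x_v=O(n^{-1/2})$.

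\emph{Surgery to a universal vertex, and the path structure.} Let $S:=V(G)\setminus(N(u)\cup\{u\})$. If $|S|\ge1$, choose $v\in S$, delete every edge of $G$ incident to $v$, and re-attach $v$ as a new pendant at an endpoint of a Hamiltonian arc in $N(u)$ compatible with the outer-face embedding. Outerplanarity is preserved, and since $x_u=1$ while $x_v=O(n^{-1/2})$, the added edge contributes Perron weight of order $x_v$ whereas the removed edges contribute only $O(x_v\cdot n^{-1/2})$; the Rayleigh quotient therefore strictly increases. Iterating contradicts extremality unless $S=\emptyset$, so $u$ is universal. With $u$ universal, outerplanarity forces $G-u$ to be a linear forest, and a concluding edge-switching argument on $G-u$, driven by the Perron entries on $N(u)$, strictly favours the spanning path, giving $G\cong K_1\vee P_{n-1}$.

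\emph{Main obstacle.} The principal technical difficulty lies in the surgery step: the re-attachment of $v\in S$ must simultaneously preserve the outer-face embedding, strictly increase the Rayleigh quotient, and admit iteration. The first and third constraints restrict which vertex of $N(u)$ may serve as the new neighbour of $v$, while the second requires the Perron advantage $x_u-x_v$ to dominate the total Perron weight of the deleted edges at $v$. Making these constraints compatible is precisely what forces the quantitative gap $\lambda(G)-\lambda(K_1\vee P_{n-1})$ to exceed the accumulated lower-order errors, and this is where the ``sufficiently large $n$'' hypothesis enters the argument.
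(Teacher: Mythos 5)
Your overall strategy---use the Perron vector to locate a vertex $u$ of degree $n-O(\sqrt n)$, then perform local surgery to make $u$ universal, then reduce $G-u$ to a spanning path---is the same one the paper (following Tait--Tobin) uses, but the surgery step contains a genuine gap, and it is exactly the step you yourself flag as the main obstacle without closing it. When you delete all edges at $v\in S$ and re-attach $v$ by a single edge to some $w$, the Rayleigh-quotient change is $2x_v\bigl(x_w-\sum_{z\sim v}x_z\bigr)/\|\mathbf{x}\|^2$, and $\sum_{z\sim v}x_z=\lambda x_v$ by the eigenvalue equation. Your bound $x_v=O(n^{-1/2})$ only gives $\lambda x_v=O(1)$, so the loss $2\lambda x_v^2$ is of the \emph{same order} as the gain $2x_vx_w\le 2x_v$; your assertion that the removed edges contribute only $O(x_v\cdot n^{-1/2})$ is off by a factor of $\sqrt n$ (it would require $x_v=O(n^{-1})$, or $O(1)$ neighbours of $v$ all with small entries, neither of which you have established). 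To make the sign come out positive you need the sharper inequality $\lambda x_v<x_w$, essentially $x_v<1/\lambda$ with an explicit constant, and that does not follow from an unquantified $O(n^{-1/2})$ estimate.

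Closing this gap is where the real work lies, and the paper does it by a different mechanism. Rather than moving one non-neighbour at a time, it first proves a global edge-counting inequality, $2e(G[B])+e(A,B)\le 4|B|-3$ for the set $B$ of non-neighbours of $u$ (Claim 4), exploiting the Hamilton cycle of the edge-maximal outerplanar embedding and $K_4$-minor-freeness to control how each component of $G[B]$ attaches to $A$. Feeding this into the squared eigenvalue identity forces $d(u)>n-1$ whenever $|B|\ge2$, with no pointwise eigenvector bound on $B$ required. Only in the residual case $|B|=1$ does the paper perform a surgery like yours, and there it first derives the sharp estimates $x_v<x_s<1/(\lambda-3)$ and $x_1>1/\lambda$, so that the net change is governed by $1+\frac{1}{\lambda}-\frac{2}{\lambda-3}$, which is provably positive once $\lambda>2+\sqrt7$. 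To repair your argument you would need an analogue of one of these two devices---either a global count over all of $S$ at once, or a genuinely sharp constant in the bound on $x_v$---before the iteration can even take its first step.
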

Some variant of the Cvetkovi\'{c}-Rowlinson Conjecture was considered by Yu, Kang, Liu
and Shan \cite{YKLS19}. For related topics on spectral properties of planar graphs, we refer to the
introduction part of \cite{TT17} and references therein.

The humble goal of this article is to give a solution to the Cvetkovi\'{c}-Rowlinson Conjecture
for all $n$. The complete proof consists of two parts. We first prove the conjecture for
$n\geq 17$, and then prove the case that $2\leq n\leq16$ where $n\neq 6$, with the aid of a computer.
We disprove the conjecture for the case of $n=6$.
\begin{thm}\label{Thm-Main}
Among all outerplanar graphs on $n\geq17$ vertices, $K_1\vee P_{n-1}$ attains the
maximum spectral radius.
\end{thm}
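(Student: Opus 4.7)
The plan is to establish the theorem by eigenvector analysis on an extremal graph, leveraging the strong structural constraints on outerplanar graphs. Let $G$ be an outerplanar graph on $n\ge 17$ vertices attaining the maximum spectral radius $\lambda=\lambda(G)$, and let $x$ denote its Perron eigenvector, normalized so that $x_u=\max_{v\in V(G)} x_v = 1$ for some vertex $u$. Since adding any edge to $G$ while preserving outerplanarity strictly increases $\lambda$ by Perron-Frobenius, we may assume $G$ is maximal outerplanar, so $|E(G)|=2n-3$ and the outer face is the unique Hamilton cycle of $G$.

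The critical structural input is that outerplanar graphs are $K_{2,3}$-minor-free, from which $|N(a)\cap N(b)|\le 2$ for every pair of distinct vertices $a,b\in V(G)$. Reading the $u$-entry of $A^2 x = \lambda^2 x$ and using this bound gives
$$\lambda^2 \;=\; d(u) + \sum_{v\in N_2(u)} |N(u)\cap N(v)|\, x_v \;\le\; d(u) + 2\sum_{v\in N_2(u)} x_v,$$
where $N_2(u)$ denotes the set of vertices at distance $2$ from $u$. The right-hand sum I would control via the eigenvalue equation at each $v\in N_2(u)$, namely $\lambda x_v \le d(v)$, combined with $\sum_{v\ne u} d(v) \le 2(2n-3)-d(u)$. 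Together with the Shu-Hong upper bound $\lambda\le \tfrac{3}{2} + \sqrt{n-7/4}$ and the explicit lower bound $\lambda \ge \lambda(K_1\vee P_{n-1}) \ge \sqrt{n-1}+1-\frac{1}{n-1}$ (evaluated from the Rayleigh quotient on the test vector $x_u=\sqrt{n-1},\ x_{v_i}=1$), this first pass pushes $d(u)$ within $O(\sqrt{n})$ of $n-1$, with constants small enough to be meaningful at $n\ge 17$.

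The hardest step, and what I expect to be the main obstacle, is promoting ``$u$ is almost-dominating'' to $d(u)=n-1$. My strategy is to apply the $K_{2,3}$-minor-free bound again to any non-neighbor $w$ of $u$: one has $|N(u)\cap N(w)|\le 2$, and in the maximal outerplanar setting this forces $w$ to sit in a very restricted local configuration, which in turn gives a strong upper bound on the Perron entry $x_w$. With this tight control, I would execute an explicit edge-swap: delete a carefully chosen edge incident to $w$ (or to the short tail containing $w$) and add the edge $uw$, check via the outerplanar embedding that the resulting graph $G'$ remains outerplanar, and estimate $x^\top(A'-A)x>0$ to contradict the extremality of $G$. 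The delicate bookkeeping of constants so that this inequality is strict already at $n=17$, rather than merely for some unspecified huge threshold, is precisely what distinguishes this approach from the asymptotic argument of Tait-Tobin.

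Once $d(u)=n-1$ is established, the conclusion is forced: the outer Hamilton cycle of $G$ passes through $u$, and removing $u$ turns it into a Hamilton path on the remaining $n-1$ vertices; any additional chord among the non-$u$ vertices would create a $K_4$-minor with $u$ (equivalently, a crossing in the outerplanar embedding), which is impossible. Combined with $|E(G)|=2n-3$, this uniquely identifies $G=K_1\vee P_{n-1}$, completing the proof.
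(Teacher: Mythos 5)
Your overall architecture (extremal $G$ is maximal outerplanar, normalize the Perron vector at a max-entry vertex $u$, expand $\lambda^2 x_u$, exploit $K_{2,3}$- and $K_4$-minor-freeness, finish with an edge swap) matches the paper's, but there are two genuine gaps. First, your displayed identity is wrong as written: $(A^2x)_u = d(u) + \sum_{v\in N(u)}|N(u)\cap N(v)|x_v + \sum_{v\in N_2(u)}|N(u)\cap N(v)|x_v$, and you have dropped the middle term, which counts walks $u\to a\to a'$ with $a,a'\in N(u)$. That term is of order $2\lambda\approx 2\sqrt{n}$ (the paper shows $G[N(u)]$ is a disjoint union of induced paths and bounds the term by $2\lambda$ minus a small correction), so it cannot be absorbed into error terms at the precision this problem requires. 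This is repairable, but it must be repaired.

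Second, and more seriously, the step you yourself flag as hardest is exactly where the proposal has no real argument. Your first pass, via $\lambda x_v\le d(v)$ and $\sum_{v\ne u}d(v)\le 4n-6-d(u)$, charges roughly $8$ units per non-neighbor of $u$ against $\lambda\approx\sqrt n+1$, which only yields $d(u)\ge n-O(\sqrt n)$ and, at $n=17$, says essentially nothing. The paper's mechanism for jumping from there to $|B|\le 1$ (where $B$ is the set of non-neighbors of $u$) is a sharp combinatorial inequality, $2e(G[B])+e(A,B)\le 4|B|-3$, proved by showing each component of $G[B]$ attaches to exactly two vertices of $N(u)$ and decomposing its neighborhood along the Hamilton cycle into three outerplanar pieces. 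This gives a per-vertex cost of $5$, and the whole argument closes precisely because $\lambda>5$ for $n\ge 17$; with a cost of $8$ you would need $n$ near $50$. Your fallback plan --- that each remaining non-neighbor $w$ sits in a ``very restricted local configuration'' with small $x_w$, to be removed by one edge swap --- does not survive scrutiny when $|B|$ can still be $\Theta(\sqrt n)$: components of $G[B]$ can be long fans hanging off two vertices of $N(u)$, their Perron entries need not be small, and no single swap gives a contradiction. (A further detail: your lower bound $\lambda\ge\sqrt{n-1}+1-\tfrac1{n-1}$ is below $5$ at $n=17$; the paper needs the stronger $\lambda\ge\sqrt n+1-\tfrac1{n-\sqrt n}>5$, obtained from $K_1\vee C_{n-1}$ minus an edge.) Your endgame --- once $d(u)=n-1$, minor-freeness forces $G-u$ to be a linear forest and the edge count forces a single path --- is correct.
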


Before our proof of Theorem \ref{Thm-Main}, let us introduce some necessary notations and terminology.
Let $G$ be a graph with vertex set $V(G)$ and edge set $E(G)$ and $S\subseteq V (G)$. We
denote by $G[S]$ the subgraph of $G$ induced by $S$ and $G-S$ the subgraph $G[V(G)\backslash S]$.
For any $v\in V(G)$, $N_G(v)$ denotes the set of neighbors of $v$ in $G$, $d_G(v)$ is defined as
$|N_G(v)|$, and $d_S(v):=|N_G(v)\cap S|$. Let $A,B\subset V (G)$ be two disjoint sets. We denote
by $N_A(B):=\bigcup_{v\in B}N_A(v)$, by $d_A(B):=|N_A(B)|$ and by $e_G(A,B)$ the number of edges with one end-vertex
in $A$ and the other one in $B$. If there is no danger of ambiguity, we use $e(A,B)$ instead
of $e_G(A,B)$. Let $G_1$ and $G_2$ be two disjoint graphs. The \emph{join} of $G_1$ and $G_2$,
denoted by $G_1\vee G_2$, is defined as a graph with vertex set $V(G_1)\cup V(G_2)$ and edge set
$E(G_1)\cup E(G_2)\cup \{xy:x\in V(G_1),y\in V(G_2)\}$. Let $A(G)$ be the
adjacency matrix of $G$ and $\lambda(G)$ be the spectral radius of $A(G)$.

A graph $H$ is a \emph{minor} of a graph $G$ if $H$ can be obtained from $G$ by a sequence of vertex
and edge deletions and edge contractions. A complete characterization of outerplanar
graphs states that a graph is outerplanar if and only if it is $K_{2,3}$-minor free and $K_4$-
minor free. It is clear that a subgraph of an outerplanar graph is also outerplanar.
An outerplanar graph is \emph{edge-maximal} (or in short, maximal), if no edge can be added to
the graph without violating outerplanarity. It is well-known that every
outerplanar graph on $n$ vertices has at most $2n-3$ edges if
$n\geq 2$. These properties will be used frequently in our proof.
For some nice article on minors in spectral graph theory, we refer
to \cite{T19}.

Our proof of Theorem \ref{Thm-Main} also needs a well-known fact and an upper bound of
the spectral radius of an outerplanar graph as following:

\begin{lem}[\rm {\cite[Exercise~11.2.7]{BM08}}]\label{Lem:Edge-maximalouterplanar}
Let $G$ be an edge-maximal outerplanar graph of order
$n\geq3$. Then G has a planar embedding whose outer face is a Hamilton cycle, all other
faces being triangles.
\end{lem}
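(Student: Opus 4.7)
The plan is to fix a planar embedding $\widetilde{G}$ of $G$ witnessing outerplanarity (so that all $n$ vertices lie on the boundary of the outer face $F_0$), and to establish two claims separately: (i) the boundary walk of $F_0$ is a Hamilton cycle, and (ii) every interior face is a triangle. Both will be proved by contradiction, turning a violation of either property into a new edge that can be added to $G$ without destroying outerplanarity, thereby contradicting edge-maximality.

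For (i), I would first show that $G$ is 2-connected. If $v$ is a cut vertex of $G$, then the boundary walk of $F_0$ visits $v$ more than once, and between two consecutive visits of $v$ along this walk one can pick neighbors $u_1, u_2$ of $v$ lying in two distinct blocks at $v$, each sitting next to $v$ on $F_0$. Drawing $u_1u_2$ as a short arc in the outer region bypassing $v$ produces a planar embedding in which every vertex is still on the outer face, so $G+u_1u_2$ is outerplanar, a contradiction; the sub-case where $v$ has a pendant neighbor is handled by the same re-embedding. Once 2-connectivity is established, the standard fact that every face boundary of a 2-connected plane graph is a cycle applies to $F_0$, and since all $n$ vertices lie on it, this cycle is Hamiltonian.

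For (ii), suppose some interior face $F$ has boundary cycle $C=x_1x_2\cdots x_\ell$ with $\ell\geq 4$. If every pair of non-consecutive vertices of $C$ were already adjacent in $G$, then $\{x_1,\ldots,x_\ell\}$ would span a $K_\ell$ in $G$, which for $\ell\geq 4$ contains $K_4$ and contradicts outerplanarity (via the $K_4$-minor characterization cited earlier). Hence some non-edge $x_ix_j$ with $x_i, x_j$ non-consecutive on $C$ exists; drawing it inside $F$ does not touch $F_0$, so all vertices remain on the outer face, and the enlarged graph is outerplanar, again contradicting edge-maximality. Therefore $\ell=3$ for every inner face.

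The principal obstacle is the re-embedding in step (i): one must verify, by examining the rotation system at the cut vertex $v$, that the arc representing $u_1u_2$ can actually be placed in the outer face without crossing any existing edge and without forcing some vertex off the outer boundary. This requires a careful local picture near $v$, using that each block at $v$ is itself outerplanar and that the two blocks meet only at $v$. The chord argument in step (ii) is comparatively routine once (i) is in place, since the outer face is not altered by the new edge.
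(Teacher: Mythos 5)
The paper does not prove this lemma at all: it is quoted verbatim from Bondy and Murty (Exercise 11.2.7 of \cite{BM08}) and used as a black box, so there is no in-paper argument to compare against. Judged on its own, your two-step plan (outer boundary is a Hamilton cycle via 2-connectedness; inner faces are triangles via insertable chords) is the standard textbook argument and is essentially sound. Two small points deserve attention. First, before speaking of cut vertices and blocks you need $G$ to be connected; this follows from the same edge-maximality device (if $G$ were disconnected, an edge joining two components could be drawn through the outer face without moving any vertex off it), but it should be said, since ``2-connected'' for $n\geq 3$ means connected \emph{and} without cut vertices. Second, the re-embedding at a cut vertex $v$ that you flag as the principal obstacle does go through: in an outerplanar embedding the edges from $v$ into each block form contiguous segments of the rotation at $v$, the corner between two consecutive segments lies in the outer face, and since $v$ occurs at least twice on the outer boundary walk it remains on the outer face after the arc $u_1u_2$ cuts off one of its corners; also $u_1u_2\notin E(G)$ because an edge between distinct blocks at $v$ would merge them. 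With those details filled in, the chord argument for interior faces is routine as you say (for a face cycle of length $\ell\geq4$, not all non-consecutive pairs can be adjacent, since that would force a $K_4$ subgraph, contradicting the $K_4$-minor-free characterization the paper records). An alternative, equally common route is induction on $n$ via a degree-2 vertex on the outer face, but your direct maximality argument is perfectly adequate.
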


\begin{lem}[Shu, Hong \cite{SH00}]\label{Lem:ShuHong}
Let $G$ be a connected outerplanar graph on $n\geq 3$ vertices. Then
$\lambda(G)\leq\frac{3}{2}+\sqrt{n-\frac{7}{4}}$.
\end{lem}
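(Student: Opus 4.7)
The plan is a Perron-eigenvector argument for maximal outerplanar graphs, leveraging their triangulated Hamilton-cycle structure. Since adding edges can only increase the spectral radius and every connected outerplanar graph on $n$ vertices embeds as a spanning subgraph of some maximal outerplanar graph on the same vertex set, it suffices to prove the bound when $G$ is itself maximal outerplanar. By Lemma~\ref{Lem:Edge-maximalouterplanar}, such $G$ has a Hamilton cycle as outer face, its $n-2$ inner faces are all triangles, and $|E(G)|=2n-3$.

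Let $x$ be the Perron eigenvector of $A(G)$, scaled so that $x_v=\max_u x_u=1$ at some vertex $v$. The eigenvalue equation at $v$ gives $\lambda=\sum_{u\in N(v)} x_u\le d(v)$. Iterating once yields
\[
\lambda^2 \;=\; d(v) \;+\; \sum_{w\neq v}|N(v)\cap N(w)|\,x_w .
\]
Since $G$ is $K_{2,3}$-minor-free, $|N(v)\cap N(w)|\le 2$ for all $w$; moreover, by Lemma~\ref{Lem:Edge-maximalouterplanar}, $N(v)$ induces a Hamiltonian path $u_1u_2\cdots u_{d(v)}$ (the two fan-end vertices along the outer cycle), so its two endpoints contribute only $1$ rather than $2$ to the sum. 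Writing $W=V(G)\setminus(N(v)\cup\{v\})$ and using $\sum_{u\in N(v)}x_u=\lambda$, this becomes
\[
\lambda^2 \;\le\; d(v)+2\lambda - x_{u_1}-x_{u_{d(v)}}+2\sum_{w\in W}x_w .
\]

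The crux is a sharp estimate on $S_W:=\sum_{w\in W}x_w$. Here I would use two facts in tandem. First, the chords $vu_1,\ldots,vu_{d(v)}$ cut the outer face into $d(v)-1$ fan regions and partition $W$ into ``pockets'' $W_1,\ldots,W_{d(v)-1}$, with every $w\in W_i$ having all its neighbors in $\{u_i,u_{i+1}\}\cup W_i$; the induced subgraph on each pocket is itself maximal outerplanar with $2(|W_i|+2)-3$ edges. Second, summing the eigenvalue equation over $V(G)$ gives the identity $\lambda\sum_{v'\in V(G)} x_{v'}=\sum_{v'\in V(G)} d(v')\,x_{v'}\le 2|E(G)|=4n-6$. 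Combining these---replacing the crude bound $x_w\le 1$ inside each pocket by the localized eigenvalue equation at pocket-vertices and using the pocket-local edge count---should produce the inequality
\[
d(v)-x_{u_1}-x_{u_{d(v)}}+2S_W \;\le\; \lambda + n - 4 ,
\]
which substituted into the previous display yields $\lambda^2\le 3\lambda+n-4$. Solving this quadratic gives exactly $\lambda\le \tfrac{3}{2}+\sqrt{\,n-\tfrac{7}{4}\,}$.

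The main obstacle is precisely this pocket-level estimate: the naive bound $x_w\le 1$ delivers only $\lambda\lesssim\sqrt{3n-3}$, a constant factor away from Shu--Hong. Extracting the linear-in-$\lambda$ term on the right-hand side requires carefully combining the pocket decomposition with the global edge identity $|E(G)|=2n-3$, and this is where the full outerplanar topology (not merely $K_{2,3}$-minor-freeness) does the work. Degenerate situations---$n\in\{3,4\}$, or $|W|\le 1$ which forces $d(v)\ge n-2$---can be handled by direct inspection or by the trivial bound $\lambda\le d(v)\le n-1$.
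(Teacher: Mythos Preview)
The paper does not prove Lemma~\ref{Lem:ShuHong}; it is quoted from Shu and Hong \cite{SH00} as an external tool, so there is no in-paper argument to compare your proposal against.

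As for the proposal itself, it is an outline rather than a proof, and you say so yourself: the decisive step is the inequality
\[
d(v)-x_{u_1}-x_{u_{d(v)}}+2\sum_{w\in W}x_w \;\le\; \lambda+n-4,
\]
which you describe as what the pocket decomposition ``should produce'' and identify as ``the main obstacle.'' Everything before that point is correct --- the reduction to maximal outerplanar graphs, the fact that $G[N(v)]$ is a path, the bound $|N(v)\cap N(w)|\le 2$ for $w\notin N(v)\cup\{v\}$, and the resulting display $\lambda^2\le d(v)+2\lambda-x_{u_1}-x_{u_{d(v)}}+2S_W$ --- and if the target inequality held, the quadratic $\lambda^2\le 3\lambda+n-4$ would indeed give exactly the Shu--Hong bound. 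But the pocket argument you sketch does not obviously close the gap: the crude estimate $x_w\le 1$ gives only $2S_W\le 2|W|$, and to reach $\lambda+|W|-3+x_{u_1}+x_{u_{d(v)}}$ on the right you would need roughly $|W|\lesssim\lambda$, which fails once $|W|$ is of order larger than $\sqrt{n}$. The ``localized eigenvalue equation at pocket-vertices combined with the pocket-local edge count'' is the right instinct, but you have not shown how to extract the crucial factor of $\lambda$ on the right-hand side from it, and that is the entire content of the lemma. As written, the proposal is a plausible strategy with its central estimate left unestablished.
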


Now we present a proof of Theorem \ref{Thm-Main}.
\vspace{1cm}

\noindent
{\bf Proof of Theorem \ref{Thm-Main}.}
For any integer $n\geq 17$, let $G_n$ be an outerplanar graph which
attains the maximum spectral radius among all outerplanar graphs of order $n$, and let $\lambda:=\lambda(G_n)$
be its spectral radius. In the rest, we use $G$ instead of $G_n$ for convenience.
Obviously, $G$ is connected and maximal. By the Perron-Frobenius Theorem, $G$ has the Perron vector
such that each component is positive. Let $X$ be a normalized one such that
maximum entry is 1. For any vertex $v\in V(G)$, we write $x_v$ for the eigenvector entry
which corresponds to $v$. Let $u\in V(G)$ such that $x_u=1$, $A=N_G(u)$ and
$B=V(G)-(\{u\}\cup A)$.

The first claim gives us a nearly tight lower bound of $\lambda$.
\begin{claim}\label{Claim:Lowerbound}
$\lambda\geq \sqrt{n}+1-\frac{1}{n-\sqrt{n}}.$
\end{claim}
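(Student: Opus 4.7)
The plan is to exploit the fact that $G$ is extremal: since $K_1\vee P_{n-1}$ is outerplanar, $\lambda=\lambda(G)\geq \lambda(K_1\vee P_{n-1})$, and so it suffices to bound the spectral radius of this one concrete graph from below. The natural tool is the Rayleigh quotient with a carefully chosen test vector that reflects the bipartite-like structure of $K_1\vee P_{n-1}$ (apex versus path).

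Let $u$ denote the apex vertex of $K_1\vee P_{n-1}$ and $v_1,\dots,v_{n-1}$ the consecutive vertices on the path. For a parameter $s>0$ I would define the test vector $x$ by $x_u=s$ and $x_{v_i}=1$ for every $i$. Counting the $(n-1)$ apex-to-path edges and the $(n-2)$ path edges gives
$$\lambda(K_1\vee P_{n-1})\;\geq\;\frac{x^{\top}Ax}{x^{\top}x}\;=\;\frac{2(n-1)s+2(n-2)}{s^{2}+(n-1)}.$$
Treating this as a function of $s$, its maximum $y^{\ast}$ is the larger root of the dual quadratic
$$g(y)\;:=\;(n-1)y^{2}-2(n-2)y-(n-1)^{2}\;=\;0,$$
so it is enough to check $y^{\ast}\geq \sqrt{n}+1-\frac{1}{n-\sqrt{n}}$. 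Because $g$ is an upward-opening parabola, this reduces to verifying $g\!\left(\sqrt{n}+1-\epsilon\right)\leq 0$ with $\epsilon=\frac{1}{n-\sqrt{n}}=\frac{1}{\sqrt{n}(\sqrt{n}-1)}$.

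Expanding $(n-1)(\sqrt{n}+1-\epsilon)^{2}-2(n-2)(\sqrt{n}+1-\epsilon)-(n-1)^{2}$, the terms free of $\epsilon$ collapse to $2+2\sqrt{n}$, and I would then exploit the identity
$$(n-1)\sqrt{n}\cdot\epsilon\;=\;\frac{(n-1)\sqrt{n}}{\sqrt{n}(\sqrt{n}-1)}\;=\;\sqrt{n}+1$$
to cancel the $2+2\sqrt{n}$ piece, leaving
$$g\!\left(\sqrt{n}+1-\epsilon\right)\;=\;(n-1)\epsilon^{2}-2\epsilon\;=\;\epsilon\bigl((n-1)\epsilon-2\bigr).$$
The sign is then determined by $(n-1)\epsilon\leq 2$, equivalently $n-1\leq 2n-2\sqrt{n}$, i.e.\ $(\sqrt{n}-1)^{2}\geq 0$, which is obvious for every $n\geq 1$.

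I do not expect any serious obstacle: the outer framework (comparison with the extremal candidate plus a two-parameter Rayleigh bound) is standard, and the whole substance of the claim is packed into the small algebraic verification that the advertised expression $\sqrt{n}+1-\frac{1}{n-\sqrt{n}}$ is precisely the ``right'' truncation of the exact closed form $y^{\ast}=\frac{(n-2)+\sqrt{(n-2)^{2}+(n-1)^{3}}}{n-1}$. The slight delicacy is choosing $\epsilon$ as $1/(n-\sqrt{n})$ rather than the perhaps more obvious $1/\sqrt{n}$; this choice is what makes the cancellation with $(n-1)\sqrt{n}\,\epsilon=\sqrt{n}+1$ exact and turns the whole inequality into $(\sqrt{n}-1)^{2}\geq 0$.
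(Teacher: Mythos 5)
Your proposal is correct: the Rayleigh quotient for the two-level test vector is computed correctly, the dual quadratic $(n-1)y^{2}-2(n-2)y-(n-1)^{2}=0$ is the right optimality condition, and the algebra does collapse as you claim --- the $\epsilon$-free part of $g(\sqrt{n}+1-\epsilon)$ is $2+2\sqrt{n}$, the identity $(n-1)\sqrt{n}\,\epsilon=\sqrt{n}+1$ kills it, and $(n-1)\epsilon\leq 2$ is exactly $(\sqrt{n}-1)^{2}\geq 0$, so $g(\sqrt{n}+1-\epsilon)\leq 0$ and the value lies below the larger root. The route is genuinely different from the paper's, though both are Rayleigh-quotient lower bounds on $\lambda(K_1\vee P_{n-1})$. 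The paper avoids your optimization and the ensuing algebra entirely by a small trick: it starts from $\Gamma=K_1\vee C_{n-1}$, whose partition into apex and cycle is equitable, so its Perron vector has only two distinct entries and its spectral radius is exactly $1+\sqrt{n}$ with $y_2^2=\frac{1}{2(n-\sqrt{n})}$ computable in two lines; deleting one cycle edge and evaluating $Y^{t}A(\Gamma')Y=Y^{t}A(\Gamma)Y-2y_2^2$ then produces the stated bound $\sqrt{n}+1-\frac{1}{n-\sqrt{n}}$ with no expansion needed, which also explains where the otherwise mysterious correction term $\frac{1}{n-\sqrt{n}}$ comes from. Your version buys a marginally stronger (optimal two-level) bound and is self-contained, at the cost of the algebraic verification that the advertised expression sits below $y^{\ast}$; the paper's version buys an essentially computation-free derivation in which the constant appears naturally. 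Either proof establishes the claim.
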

\begin{proof}
Let $\Gamma=K_1\vee C_{n-1}$, where $C_{n-1}$ denotes a cycle
on $n-1$ vertices. Suppose that $Y=(y_1,y_2,\ldots,y_n)^t$ is the Perron
vector of $\Gamma$, where $y_1$ corresponds to the vertex of degree $n-1$.
By symmetry, $y_2=y_3=\cdots=y_n$. Then $\lambda(\Gamma)y_1=(n-1)y_2$,
$\lambda(\Gamma)y_2=y_1+2y_2$ and $y_1^2+(n-1)y_2^2=1$.
It follows that $\lambda(\Gamma)=1+\sqrt{n}$ and $y^2_2=\frac{1}{2(n-\sqrt{n})}$.
Let $e\in E(C_{n-1})$ and ${\Gamma}'=\Gamma-e$. Then by Rayleigh principle,
$\lambda({\Gamma}')\geq Y^tA({\Gamma}')Y=Y^tA(\Gamma)Y-2y_2^2=\sqrt{n}+1-\frac{1}{n-\sqrt{n}}.$
Obviously, ${\Gamma}'$ is outerplanar, and
$\lambda(G)\geq\lambda({\Gamma}')\geq \sqrt{n}+1-\frac{1}{n-\sqrt{n}},$ as required.
\end{proof}

As a warm up, we quickly determine the structure of $G[A]$ approximately.
\begin{claim}\label{Claim:Approxstructure}
$G[A]$ is a union of disjoint induced paths or an induced path. (In particular, we also view
an isolated vertex in $G[A]$ as an induced path.)
\end{claim}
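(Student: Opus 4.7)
The plan is to exploit the forbidden-minor characterization of outerplanar graphs that is recalled in the introduction: $G$ contains neither $K_4$ nor $K_{2,3}$ as a minor. Since $A = N_G(u)$, every vertex of $A$ is already adjacent to $u$, which makes $u$ a convenient ``hub'' that we combine with local structure inside $G[A]$ to produce forbidden minors. The argument naturally splits into two steps: first bound the maximum degree of $G[A]$, then rule out cycles.

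First I would show that $\Delta(G[A]) \leq 2$. Suppose, for contradiction, that some $v \in A$ has three neighbors $v_1, v_2, v_3 \in A$. Because $v_1, v_2, v_3 \in N_G(u)$, the pair $\{u, v\}$ is completely joined to $\{v_1, v_2, v_3\}$ in $G$, giving a $K_{2,3}$ subgraph, hence a $K_{2,3}$ minor, a contradiction. Consequently $G[A]$ has maximum degree at most $2$, so each connected component of $G[A]$ is either a path or a cycle.

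Next I would rule out cycles in $G[A]$. Assume $C = v_1 v_2 \cdots v_k v_1$ is a cycle of $G[A]$ with $k \geq 3$. If $k = 3$, then $\{u, v_1, v_2, v_3\}$ induces a $K_4$ in $G$ (since $u$ is adjacent to all $v_i$ and $v_1 v_2, v_2 v_3, v_1 v_3$ are edges of the triangle), contradicting outerplanarity. If $k \geq 4$, contract the sub-path $v_3 v_4 \cdots v_k$ into a single vertex $w$; in the resulting minor, $w$ is adjacent to $v_1$ (via the original edge $v_1 v_k$), to $v_2$ (via the original edge $v_2 v_3$), and to $u$ (via any $u v_i$ with $3 \leq i \leq k$), while $u v_1, u v_2, v_1 v_2$ remain edges. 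Hence $\{u, v_1, v_2, w\}$ yields a $K_4$ minor of $G$, again a contradiction. Therefore $G[A]$ contains no cycle and is a disjoint union of paths (isolated vertices counting as trivial paths), as claimed.

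The steps are all short and the only real care needed is in the $k \geq 4$ case, where one must verify that the contracted path $v_3 \cdots v_k$ is actually a connected subgraph whose contraction is allowed (which is immediate, as the $v_i$'s lie on a cycle of $G[A]$). No spectral information about $X$ or $\lambda$ is used in this claim; it is a purely structural consequence of outerplanarity together with the choice $A = N_G(u)$.
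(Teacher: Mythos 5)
Your proposal is correct and follows essentially the same route as the paper: ruling out degree $\geq 3$ in $G[A]$ via a $K_{2,3}$ on $\{u,v\}$ versus the three neighbours, and ruling out cycles by contracting to obtain a $K_4$-minor together with $u$. Your write-up merely spells out the contraction in the $k\geq 4$ case more explicitly than the paper's one-line ``contract the cycle into a triangle.''
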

\begin{proof}
We first claim that $G[A]$ contains no vertex of degree at least $3$ in $A$.
If not, then there is a $K_{2,3}$ in $G[A\cup\{u\}]$, a contradiction.

We then claim that there is no cycle in $G[A]$. Suppose to the contrary that there is
a cycle in $G[A]$. Then we can contract the cycle into a triangle, and there is a $K_4$ in the
resulting graph. That is, there is a $K_4$-minor in $G$, a contradiction.

From the two claims mentioned above, we conclude that $G[A]$ is the union of some
induced paths or an induced paths, in which we view each isolated vertex as an induced
path.
\end{proof}
Let
$$S=\{v: v\in A, d_{G[A]}(v)=1\}.$$
For two vertices $x,y\in V(G)$, we write $x\sim y$ if $x$ is adjacent to $y$. By Claim \ref{Claim:Lowerbound}, we have
$d(u):=d_u\geq \lambda\geq\sqrt{n}+1-\frac{1}{n-\sqrt{n}}>5$.

We want to show that $d_u$ is very close to $n-1$. As a first step, we must associate $d_u$ with
$\lambda$ by the following.
\begin{claim}
\begin{eqnarray}\label{eqn:1}
\lambda^2\leq d_u+2\lambda-\frac{2}{\sqrt{n-\frac{7}{4}}+\frac{3}{2}}+\sum_{v\in B}d_A(v)x_v.
\end{eqnarray}
\end{claim}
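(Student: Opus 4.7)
The plan is to apply the eigenvector equation twice at the vertex $u$. Since $x_u=1$ and $\lambda x_u=\sum_{w\in A}x_w$, I would compute
\[
\lambda^2 \;=\; \lambda\cdot \lambda x_u \;=\; \sum_{w\in A}\lambda x_w \;=\; \sum_{w\in A}\sum_{v\sim w}x_v,
\]
and then split the inner sum according to whether $v=u$, $v\in A$, or $v\in B$. Swapping the order of summation in the latter two blocks produces the identity
\[
\lambda^2 \;=\; d_u \;+\; \sum_{v\in A}d_{G[A]}(v)\,x_v \;+\; \sum_{v\in B}d_A(v)\,x_v.
\]
By Claim~\ref{Claim:Approxstructure}, $G[A]$ is a disjoint union of induced paths, so $d_{G[A]}(v)\in\{0,1,2\}$ and precisely the vertices of $S$ attain the value $1$. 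Comparing with $2\lambda=2\sum_{v\in A}x_v$ gives the pointwise identity $d_{G[A]}(v)=2-\mathbf{1}[v\in S]-2\cdot\mathbf{1}[d_{G[A]}(v)=0]$, and consequently
\[
\sum_{v\in A}d_{G[A]}(v)\,x_v \;\le\; 2\lambda \;-\; \sum_{v\in S}x_v.
\]
Plugging this into the identity above reduces the claim to proving
\[
\sum_{v\in S}x_v \;\ge\; \frac{2}{\sqrt{n-\tfrac{7}{4}}+\tfrac{3}{2}}.
\]

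I would establish this in two short steps. First, that $|S|\ge 2$: since $G$ is edge-maximal outerplanar of order $n\ge 17$, Lemma~\ref{Lem:Edge-maximalouterplanar} furnishes a planar embedding whose outer face is a Hamilton cycle and whose remaining faces are triangles. If $w$ is either Hamilton-cycle neighbor of $u$, then the edge $uw$ borders exactly one internal face, a triangle $uwz$, which forces $z\in A$ and $wz\in E(G[A])$. Hence $G[A]$ contains at least one edge, so the path component of $G[A]$ containing it is nontrivial and contributes two endpoints to $S$. Second, for every $v\in S\subseteq A$ the relation $v\sim u$ and the eigenvector equation at $v$ yield $\lambda x_v\ge x_u=1$, i.e.\ $x_v\ge 1/\lambda$; invoking the Shu--Hong bound $\lambda\le \tfrac{3}{2}+\sqrt{n-\tfrac{7}{4}}$ of Lemma~\ref{Lem:ShuHong} gives $x_v\ge 1\big/\bigl(\tfrac{3}{2}+\sqrt{n-\tfrac{7}{4}}\bigr)$ for each $v\in S$. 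Summing over any two vertices of $S$ yields the desired lower bound and completes the proof.

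The only step with any content is the verification that $|S|\ge 2$: this is where one must appeal to the full maximal outerplanar structure via Lemma~\ref{Lem:Edge-maximalouterplanar}, rather than merely the union-of-paths description of Claim~\ref{Claim:Approxstructure}, since a priori $G[A]$ could consist entirely of isolated vertices. Everything else is a routine two-step walk from $u$ combined with the Shu--Hong upper bound.
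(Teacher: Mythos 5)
Your proof is correct and follows essentially the same route as the paper: the two-step walk identity at $u$, the bound $\sum_{v\in A}d_{G[A]}(v)x_v\le 2\lambda-\sum_{v\in S}x_v$, and the Shu--Hong estimate $x_v\ge 1/\lambda\ge 1/\bigl(\tfrac{3}{2}+\sqrt{n-\tfrac{7}{4}}\bigr)$ for $v\in S$. The only divergence is the degenerate case where $G[A]$ is edgeless: the paper keeps that case and observes the inequality still holds because $2\lambda-\tfrac{2}{3/2+\sqrt{n-7/4}}>0$ for $n\ge 16$, whereas you rule it out structurally via the triangulated Hamilton-cycle embedding; both are valid.
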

\begin{proof}
Note that for any $v\in S$, we have $\lambda x_v>x_u=1$. By Lemma \ref{Lem:ShuHong}, we obtain
$x_v>\frac{1}{\lambda}\geq\frac{1}{\frac{3}{2}+\sqrt{n-\frac{7}{4}}}$.
The first equality below was used by Tait and Tobin (see the proof
of Lemma 4 in \cite{TT17}), which also appeared in several references,
see \cite{FMS93} for example:
\begin{align*}
\lambda^2&=\lambda^2x_u=d_u+\sum_{y\sim u}\sum_{z\in N(y)\cap A}x_z+\sum_{y\sim u}\sum_{z\in N(y)\cap B}x_z=d_u+\sum_{v\in A}d_A(v)x_v+\sum_{v\in B}d_A(v)x_v.
\end{align*}
If $G[A]$ consists of isolated vertices, i.e., without any edge, then $\sum\limits_{v\in A}d_A(v)x_v=0$.
Thus, we have
\begin{align*}
\lambda^2&=d_u+\sum_{v\in B}d_A(v)x_v.
\end{align*}
Otherwise, $G[A]$ contains at least one edge, and it follows $|S|\geq 2$. We have
\begin{align*}
\lambda^2&=d_u+\sum_{v\in A}d_A(v)x_v+\sum_{v\in B}d_A(v)x_v\\
&=d_u+\sum_{v\in S}x_v+\sum_{v\in \{v\in A: d_A(v)=2\}}2x_v+\sum_{v\in B}d_A(v)x_v\\
&\leq d_u+2\lambda-\sum_{v\in S}x_v+\sum_{v\in B}d_A(v)x_v\\
&\leq d_u+2\lambda-\frac{2}{\frac{3}{2}+\sqrt{n-\frac{7}{4}}}+\sum_{v\in B}d_A(v)x_v.
\end{align*}
Since $2\lambda-\frac{2}{\frac{3}{2}+\sqrt{n-\frac{7}{4}}}>0$ for $n\geq 16$,
we have proved the claim.
\end{proof}

Our goal of the most of the rest is to show that $|B|\leq 1$ firstly, and then show $B=\emptyset$.
We prove this fact by contradiction. Suppose to the contrary that
\begin{align}\label{align:2}
|B|\geq 2.
\end{align}
Since $G$ is outerplanar, $G[B]$ is also outerplanar, and so $e(G[B])\leq 2|B|-3$ by (\ref{align:2}).
In the rest, let $B_1, B_2,\ldots,B_t$ be the vertex sets of all components of $G[B]$, respectively.
The coming claim gives a tight upper bound of the sum of all degrees of vertices of $B$
in $G$, which plays a central role in our proof. Since adding a new edge can increase the
value of the spectral radius (recall $G$ is connected), $G$ is a maximal outerplanar
graph. Therefore, Lemma \ref{Lem:Edge-maximalouterplanar} can be used below.

\begin{claim}\label{Claim:MainOne}
(i) For each $i\in [1,t]$, $d_A(B_i)=2$.
(ii) If $|B_i|\geq 2$, then $2e(G[B_i])+e(A,B_i)\leq 4|B_i|-3$.
In particular, $2e(G[B])+e(A,B)\leq 4|B|-3$ (recall $|B|\geq 2$).
\end{claim}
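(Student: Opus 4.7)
My plan is to exploit the Hamilton-cycle structure of $G$ supplied by Lemma \ref{Lem:Edge-maximalouterplanar}: fix the planar embedding whose outer face is the Hamilton cycle $C$ and whose inner faces are all triangles, and list the neighbours of $u$ as $a_1,a_2,\ldots,a_k$ in the cyclic order induced by $C$. Since every inner face at $u$ is a triangle, each consecutive pair $(a_j,a_{j+1})$ bounds a triangular face $ua_ja_{j+1}$, forcing $a_ja_{j+1}\in E(G)$. For each $j$ let $P_j$ be the arc of $C$ running from $a_j$ to $a_{j+1}$ and avoiding $u$; any vertex strictly interior to $P_j$ must lie in $B$ (otherwise it would be a neighbour of $u$ inserted between two cyclically consecutive ones), and these interior vertices are joined pairwise via Hamilton edges, forming a connected subset of $G[B]$.

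The polygon on the side of $a_ja_{j+1}$ opposite $u$ is bounded by $P_j$ and the edge $a_ja_{j+1}$, and is triangulated by chords of $G$; hence the induced subgraph $H_j$ on $\{a_j,a_{j+1}\}$ together with the interior of $P_j$ is itself a maximal outerplanar graph. A planarity argument (any edge between two different fans would have to cross a chord $ua_\ell$) shows that components of $G[B]$ are in bijection with the nonempty $P_j$-interiors, with each $B_i$ equal to the interior of a unique $P_j$. Part (i) then follows at once: $N_A(B_i)\subseteq\{a_j,a_{j+1}\}$ and the end-vertices of $P_j$ adjacent on $C$ to $a_j$ and $a_{j+1}$ both lie in $B_i$, giving $d_A(B_i)=2$.

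For (ii) on a fixed $B_i$, maximality of $H_j$ on $|B_i|+2$ vertices yields $e(H_j)=2|B_i|+1$; subtracting the lone edge $a_ja_{j+1}$ gives the identity $e(G[B_i])+e(A,B_i)=2|B_i|$. Combining with the outerplanar bound $e(G[B_i])\leq 2|B_i|-3$, valid for $|B_i|\geq 2$, produces $2e(G[B_i])+e(A,B_i)\leq 4|B_i|-3$. For the ``in particular'' global bound I would sum the identity over all components to get $e(G[B])+e(A,B)=2|B|$ and use $e(G[B])\leq 2(|B|-t_1)-3t_2$, where $t_1,t_2$ count singleton and non-singleton components; the hypothesis $|B|\geq 2$ rules out the only troublesome case $t_1=1,t_2=0$, so $2t_1+3t_2\geq 3$ and the desired inequality follows. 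The main obstacle I anticipate is the topological step that each fan contributes a maximal outerplanar sub-polygon on exactly $B_i\cup\{a_j,a_{j+1}\}$ and that components of $G[B]$ cannot straddle fans; once this is established, the remaining edge counting is routine.
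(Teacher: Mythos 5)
Your proposal is correct, and it proves the claim by a genuinely different route from the paper. The paper fixes one component $B_i$ with $N_A(B_i)=\{x,x'\}$, takes the Hamilton subpath $xp_1\cdots p_sx'$, shows via a $K_4$-minor argument that the $x$-neighbours and $x'$-neighbours among the $p_q$ cannot interleave, and then splits $G[B_i\cup\{x,x'\}]-xx'$ into three overlapping outerplanar pieces $G_1,G_2,G_3$ whose edge counts are bounded separately and recombined; part (i) is done independently by contracting $B_i$ and invoking $K_{2,3}$-minor-freeness. You instead exploit the global fan structure at $u$: since every inner face at $u$ is a triangle, consecutive neighbours $a_j,a_{j+1}$ of $u$ along the Hamilton cycle are adjacent, each component of $G[B]$ is exactly the interior of one arc $P_j$, and the polygon on the far side of the chord $a_ja_{j+1}$ is itself a triangulated (maximal outerplanar) graph $H_j$ on $|B_i|+2$ vertices. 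This yields the exact identity $e(G[B_i])+e(A,B_i)=e(H_j)-1=2|B_i|$, after which the bound follows by a one-line addition of $e(G[B_i])\le 2|B_i|-3$, and part (i) falls out of the same picture. Your route buys more: an equality rather than an inequality, a cleaner treatment of the summation over components (your $2t_1+3t_2\ge 3$ bookkeeping is sound and correctly isolates $|B|=1$ as the only bad case), and as a by-product the fact that $G[A]$ is always the induced path $a_1\cdots a_k$, which the paper only establishes later. The one point to state carefully is that the triangular-face argument applies only to the $k-1$ consecutive pairs on the inner side of $u$; the wrap-around pair consisting of the two Hamilton-neighbours of $u$ bounds the outer face, so that pair must be excluded. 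This costs nothing, since the arcs $P_1,\dots,P_{k-1}$ already cover all of $C-u$, but as written your phrase ``cyclic order'' leaves that degenerate pair in play.
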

\begin{proof}
(i) Since $G$ is $K_{2,3}$-minor free, $B_i$ has at most 2 neighbors in $A$
for any $i\in [1,t]$. Indeed, if not, we contract all vertices of $B_i$ into
a single vertex, and would find a $K_{2,3}$ in the resulting graph.
Thus, $d_A(B_i)\leq 2$. Recall that there is a Hamilton
cycle in $G$. Thus, $d_A(B_i)=2$. This proves Claim \ref{Claim:MainOne}~(i).

(ii) By Claim \ref{Claim:MainOne}~(i), we can assume that $N_A(B_i)=\{x,x'\}$ for
any $i\in [1,t]$. By Lemma \ref{Lem:Edge-maximalouterplanar}, there
is a planar embedding of $G$, say $\widetilde{G}$, such that its outer-face is a Hamilton cycle. Let $P:=xp_1p_2\cdots p_sx'$ be
the $(x,x')$-path on the Hamilton cycle passing through all vertices in $B_i$. That is, $B_i=\{p_1,\ldots,p_s\}$.
In the rest of the proof, when there is no danger of ambiguity, we do not
distinguish $G$ and $\widetilde{G}$.

Suppose that $|B_i|\geq 2$. We first claim that there are no subscripts $j,k$ such that $1\leq j<k\leq s$ and $xp_k,x'p_j\in E(G)$.
Suppose not. Then we first contract three paths $p_1\ldots p_j$, $p_k\ldots p_s$ and $xux'$
into vertices $w_1,w_2$ and an edge $xx'$, respectively, and then
contract the path $w_1p_{j+1}\ldots p_{k-1}w_2$ into an edge $w_1w_2$,
resulting in a $K_4$. In this way, we can find a $K_4$-minor in $G$,
a contradiction. In the following, set $l_1:=\max\{q:p_qx\in E(G)\}$
and $l_2:=\min\{q:p_qx'\in E(G)\}$. Therefore $l_1\leq l_2$.
Also, $G_1:=G[\{x,p_1,\ldots,p_{l_1}\}]$ is outerplanar, and hence $e(G_1)\leq 2(l_1+1)-3=2l_1-1$. Note that $G_2:=G[p_{l_1},\ldots,p_{l_2},x,x']-xx'$ is outerplanar. Thus, if $l_2\geq l_1+1$,
then $e(G_2)\leq e(G[\{p_{l_1},\ldots,p_{l_2}\}])+2\leq 2(l_2-l_1+1)-3+2=2(l_2-l_1)+1$; if $l_1=l_2$
then $e(G_2)=2$. Let $G_3:=G[\{p_{l_2},\ldots,p_s,x'\}]$. Then $e(G_3)\leq 2(s-l_2+1+1)-3=2(s-l_2)+1$.

Observe that for any $i\in [1,l_1]$ and $j\in [l_2,s]$ such that $j\geq i+2$, we have $p_ip_j\notin E(G)$,
since otherwise we can find a $K_4$-minor in $G$ similarly as above. Hence $e(G[B_i\cup \{x,x'\}]-xx')=e(G_1)+e(G_2)+e(G_3)-2$,
where the term ``-2" comes from the fact that
the edges $xp_{l_1},x'p_{l_2}$ are counting twice when we compute the value of $e(G_1)+e(G_2)+e(G_3)$.

If $l_2\geq l_1+1$, then $e(G[B_i\cup \{x,x'\}]-xx')=e(G_1)+e(G_2)+e(G_3)-2\leq (2l_1-1)+(2(l_2-l_1)+1)+(2(s-l_2)+1)-2=2s-1$, Thus,
$2e(G[B_i])+e(A,B_i)\leq 2e(G[B_i\cup \{x,x'\}]-xx')-e(A,B_i)\leq 2(2s-1)-3=4s-5$,
where $e(A,B_i)\geq 3$ since $|B_i|\geq2$ and each face inside $\widetilde{G}[B_i\cup \{x,x'\}]$
is a triangle.

If $l_2=l_1$, then $e(G_2)=2$. In this case, $e(G[B_i\cup \{x,x'\}]-xx')=e(G_1)+e(G_2)+e(G_3)-2\leq (2l_1-1)+2+(2(s-l_2)+1)-2=2s$.
Then $2e(G[B_i])+e(A,B_i)\leq 2e(G[B_i\cup \{x,x'\}]-xx')-e(A,B_i)\leq 2\cdot(2s)-3=4s-3$, where $e(A,B_i)\geq 3$
since $|B_i|\geq 2$.

Thus, for any $i\in[1,t]$ with $|B_i|\geq 2$, we have $2e(G[B_i])+e(A,B_i)\leq 4s-3$. If $|B_i|=1$
then $2e(G[B_i])+e(A,B_i)\leq 2$. Summing over all indices $i$,
we have $e(B,A)+2e(G[B])\leq 4|B|-3$. This proves Claim \ref{Claim:MainOne}~(ii).
\end{proof}
By using Claim \ref{Claim:MainOne}~(ii), we can estimate the upper bound of $\sum\limits_{v\in B}d_A(v)x_v$
as follows.
\begin{claim}
\begin{eqnarray}\label{eqn:3}
\sum_{v\in B}d_A(v)x_v\leq \frac{5n-5d_u-7}{\sqrt{n}+1-\frac{1}{n-\sqrt{n}}}.
\end{eqnarray}
\end{claim}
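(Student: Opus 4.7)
The plan is to estimate each $x_v$ (for $v \in B$) by $d(v)/\lambda$ via the eigenvalue equation and then bound the resulting weighted degree sum component by component. Since $\lambda x_v = \sum_{w\sim v} x_w \leq d(v)$ using $x_w\leq 1$, we have $x_v\leq d(v)/\lambda$, so
\[
\sum_{v \in B} d_A(v)\, x_v \;\leq\; \frac{1}{\lambda}\sum_{v \in B} d_A(v)\, d(v).
\]
It therefore suffices to prove the purely combinatorial estimate $\sum_{v \in B} d_A(v)\, d(v) \leq 5|B|-2 = 5n-5d_u-7$ and then divide by the lower bound on $\lambda$ from Claim~\ref{Claim:Lowerbound}.

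I would establish this inequality per component, reusing the parameters $l_1\le l_2$ and the vertices $x_i,x_i'$ from the proof of Claim~\ref{Claim:MainOne}~(ii). Fix a component $B_i$ of size $s=|B_i|\ge 2$. By Claim~\ref{Claim:MainOne}~(i) every $v\in B_i$ satisfies $d_A(v)\le 2$, and $d_A(v)=2$ occurs for at most one vertex (namely $p_{l_1}$, and only in the case $l_1=l_2$; two such vertices would force the crossing chords $x_ip_k,\,x_i'p_j$ excluded in the proof of Claim~\ref{Claim:MainOne}~(ii)). In the case $l_1<l_2$ every vertex of $B_i$ has $d_A\leq 1$, so
\[
\sum_{v\in B_i} d_A(v)\, d(v) \;\leq\; \sum_{v\in B_i} d(v) \;=\; e(A,B_i)+2e(G[B_i]) \;\leq\; 4s-3 \;\leq\; 5s-2,
\]
directly from Claim~\ref{Claim:MainOne}~(ii). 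In the case $l_1=l_2$, exactly one vertex $p_{l_1}$ has $d_A=2$, and since its neighbours lie in $\{x_i,x_i'\}\cup(B_i\setminus\{p_{l_1}\})$ we have $d(p_{l_1})\leq 2+(s-1)=s+1$; hence
\[
\sum_{v\in B_i} d_A(v)\, d(v) \;=\; \sum_{v\in B_i} d(v) + d(p_{l_1}) \;\leq\; (4s-3)+(s+1) \;=\; 5s-2.
\]
A singleton component contributes $d_A(v)\,d(v)=2\cdot 2=4$, since its unique vertex has $d_A=2$ and $d_B=0$.

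Summing, let $t_1$ and $t_{\ge 2}$ denote the numbers of singleton components and components of size at least $2$, respectively. Then
\[
\sum_{v\in B} d_A(v)\, d(v) \;\leq\; \sum_{s_i\geq 2}(5 s_i-2) + 4 t_1 \;=\; 5|B| - t_1 - 2 t_{\ge 2}.
\]
The hypothesis $|B|\geq 2$ forces $t_1+2t_{\ge 2}\ge 2$ (either $t_{\ge 2}\ge 1$, giving $2t_{\ge 2}\ge 2$, or $t_{\ge 2}=0$ and then $t_1=t\ge 2$), so $\sum_{v\in B}d_A(v)\,d(v)\leq 5|B|-2$. Dividing by $\lambda$ and substituting $\lambda\ge \sqrt{n}+1-\tfrac{1}{n-\sqrt{n}}$ from Claim~\ref{Claim:Lowerbound} yields the stated bound. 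The only subtlety is the separate treatment of singleton components, which is what forces (and makes tight) the hypothesis $|B|\geq 2$.
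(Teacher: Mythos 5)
Your argument is correct and is essentially the paper's own proof in slightly different packaging: the paper also works component by component, uses $\lambda x_v\le d_G(v)$ (from the normalization $x_w\le 1$), invokes Claim 4 for $2e(G[B_i])+e(A,B_i)\le 4|B_i|-3$, accounts for the at most one vertex of $B_i$ with two $A$-neighbours via the trivial degree bound $|B_i|+1$, and closes the all-singletons case with the same ``$t\ge 2$'' observation. The only cosmetic difference is that the paper keeps the eigenvector entries throughout, bounding $\sum_{v\in B_i}d_A(v)x_v\le \sum_{v\in B_i}x_v+\max_{v\in B_i}x_v$ and only then applying $\lambda x_v\le d_G(v)$, which spares it your $l_1=l_2$ versus $l_1<l_2$ case split (the identity of the doubly-attached vertex is irrelevant once you know there is at most one).
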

\begin{proof}
Recall that $B_1,B_2,\ldots,B_t$ are all components of $G[B]$.
For any $i\in[1,t]$, by Claim \ref{Claim:MainOne}~(i), $B_i$ has two neighbors in $A$.
Since $G$ contains no $K_{2,3}$, there is at most one vertex in $B_i$ with two
neighbors in $A$. Set $x'_i:=\max\{x_v:v\in B_i\}$. Thus, if $|B_i|\geq 2$ then
\begin{align*}
\sum_{v\in B_i}d_A(v)x_v&\leq \sum_{v\in B_i}x_v+x'_i=\frac{1}{\lambda}(\sum_{v\in B_i}\lambda x_v+\lambda x'_i)\\
&\leq \frac{1}{\lambda}(\sum_{v\in B_i}d_G(v)+(|B_i|-1+2))\\
&=\frac{1}{\lambda}\left(e(A,B_i)+2e(G[B_i])+|B_i|+1\right)\\
&=\frac{1}{\lambda}(5|B_i|-2).
\end{align*}
If $|B_i|=1$ then $\sum\limits_{v\in B_i}d_A(v)x_v\leq \frac{2}{\lambda}\sum\limits_{w\in N_A(B_i)}x_w\leq \frac{4}{\lambda}=\frac{1}{\lambda}(5|B_i|-1)$. Observe that if
$|B_i|=1$ for every $i$, then $t\geq 2$ since $|B|\geq2$.
Summing over all $i\in [1,t]$, we have
$$\sum_{v\in B}d_A(v)x_v\leq \frac{5|B|-2}{\sqrt{n}+1-\frac{1}{n-\sqrt{n}}}= \frac{5n-5d_u-7}{\sqrt{n}+1-\frac{1}{n-\sqrt{n}}}.$$
This proves the claim.
\end{proof}
In what follows, we aim to show that
\begin{align}
\left(1-\frac{5}{\sqrt{n}+1-\frac{1}{n-\sqrt{n}}}\right)d_u>\max\left\{(n-1)\cdot \left(1-\frac{5}{\sqrt{n}+1-\frac{1}{n-\sqrt{n}}}\right),0\right\}.
\end{align}
holds for $n\geq 17$. This finally results in $d(u)>n-1$, and implies that $|B|\geq 2$ does not hold.

By (\ref{eqn:1}), (\ref{align:2}) and (\ref{eqn:3}), we infer
\begin{align*}
&\left(1-\frac{5}{\sqrt{n}+1-\frac{1}{n-\sqrt{n}}}\right)d_u\\
&\geq {\lambda}^2-2\lambda+\frac{2}{\frac{3}{2}+\sqrt{n-\frac{7}{4}}}-\frac{5n-7}{\sqrt{n}+1-\frac{1}{n-\sqrt{n}}}\\
&\geq n-1-\frac{2}{\sqrt{n}-1}+\frac{1}{n(\sqrt{n}-1)^2}+\frac{2}{\frac{3}{2}+\sqrt{n-\frac{7}{4}}}-\frac{5n-7}{\sqrt{n}+1-\frac{1}{n-\sqrt{n}}}\\
&>(n-1)\cdot\left(1-\frac{5}{\sqrt{n}+1-\frac{1}{n-\sqrt{n}}}\right)-\frac{2}{\sqrt{n}-1}+\frac{2}{\frac{3}{2}+\sqrt{n-\frac{7}{4}}}+\frac{2}{\sqrt{n}+1}\\
&>(n-1)\cdot\left(1-\frac{5}{\sqrt{n}+1-\frac{1}{n-\sqrt{n}}}\right)+\frac{2\sqrt{n-\frac{7}{4}}-7}{n-1}\\
&\geq (n-1)\cdot\left(1-\frac{5}{\sqrt{n}+1-\frac{1}{n-\sqrt{n}}}\right)>0
\end{align*}
for $n\geq 17$. (Note that $(1-\frac{5}{\sqrt{n}+1-\frac{1}{n-\sqrt{n}}})<0$ when $n=16$.)

Therefore, we have $|B|\leq1$. Suppose that $|B|=1$. At this point, we can know more
information on $G[A]$ than Claim \ref{Claim:Approxstructure}.
\begin{claim}
$G[A]$ is an induced path.
\end{claim}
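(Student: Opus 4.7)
The plan is to combine the structural result from Claim~\ref{Claim:Approxstructure} with a simple edge count that uses the maximality of $G$. Let $b$ be the unique vertex of $B$, so that $|A|=n-2$, and suppose $G[A]$ is a disjoint union of $k\ge 1$ induced paths (with isolated vertices counted as trivial paths). Then $e(G[A]) = |A|-k = n-2-k$, and the goal reduces to showing $k=1$.

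To do this I would first pin down $d_A(b)$ exactly. For the upper bound: if $b$ had three neighbors in $A$, then since every vertex of $A$ is adjacent to $u$, these three common neighbors together with $\{u,b\}$ would form a $K_{2,3}$ subgraph, contradicting outerplanarity; hence $d_A(b)\le 2$. For the lower bound: by Lemma~\ref{Lem:Edge-maximalouterplanar}, the connected maximal outerplanar graph $G$ has a planar embedding whose outer face is a Hamilton cycle $C$, and the two neighbors of $b$ on $C$ must both lie in $A$, since neither can equal $u$ (as $b\notin N(u)$). So $d_A(b)\ge 2$, and therefore $d_A(b)=d(b)=2$.

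Finally, I would double-count $|E(G)|$. On the one hand, $|E(G)|=2n-3$ because $G$ is a maximal outerplanar graph on $n\ge 3$ vertices. On the other hand, partitioning the edges by the location of their endpoints gives
$$|E(G)| = d(u)+e(G[A])+e(A,B)+e(G[B]) = (n-2)+(n-2-k)+2+0 = 2n-2-k,$$
using that there are no edges between $u$ and $B$. Comparing the two expressions forces $k=1$, so $G[A]$ is a single induced path.

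There is no deep obstacle: the argument is a one-line counting identity once the structure of $G[A]$ and the degree of $b$ are in hand. The only subtlety worth highlighting is that pinning down $d_A(b)=2$ exactly uses both ingredients in the outerplanar characterization simultaneously, namely $K_{2,3}$-minor-freeness for the upper bound and the Hamilton cycle from Lemma~\ref{Lem:Edge-maximalouterplanar} for the lower bound.
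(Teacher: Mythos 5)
Your argument is correct, but it reaches the conclusion by a genuinely different route than the paper. The paper also starts from Claim~\ref{Claim:Approxstructure} and the triangulated embedding $\widetilde{G}$ of Lemma~\ref{Lem:Edge-maximalouterplanar}, but then argues geometrically in one sentence: if $G[A]$ broke into two or more paths, the single vertex of $B$ could not repair every gap between consecutive components along the Hamilton cycle, so some inner face of $\widetilde{G}$ would fail to be a triangle. You replace that face-based appeal with a global edge count: $d_A(b)=2$ (the upper bound from $K_{2,3}$-freeness via $u$, the lower bound from the two Hamilton-cycle neighbours of $b$), $e(G[A])=n-2-k$ for $k$ path components, and $|E(G)|=2n-3$ by maximality, which forces $k=1$. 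Your version is a little longer but more self-contained and mechanically checkable, whereas the paper's sentence asks the reader to visualize why a second gap in $G[A]$ cannot be triangulated; the only external fact you lean on is that a maximal outerplanar graph has exactly $2n-3$ edges, which the paper states only as an upper bound but which follows at once from Lemma~\ref{Lem:Edge-maximalouterplanar} and Euler's formula. As a minor streamlining, the lower bound $d_A(b)\ge 2$ is not actually needed: writing $2n-3=(n-2)+(n-2-k)+e(A,B)$ gives $e(A,B)=k+1$, and $e(A,B)\le 2$ alone already forces $k=1$.
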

\begin{proof}
By Claim \ref{Claim:Approxstructure}, $G[A]$ is a union of disjoint induced paths or an induced path. Since
$G$ is a maximal outerplanar graph, by Lemma \ref{Lem:Edge-maximalouterplanar},
$G$ has a planar embedding, say $\widetilde{G}$,
whose outer face is a Hamilton cycle, all other faces being triangles. If $G[A]$ is not an
induced path, then the fact $|B|=1$ implies there is an inner face in $\widetilde{G}$
which is not a triangle, a contradiction. This proves the claim.
\end{proof}

Finally, we show that, indeed, $B$ is an empty set.

\begin{claim}
$B=\emptyset.$
\end{claim}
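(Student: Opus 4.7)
The plan is to argue by contradiction. Assume $|B|=1$, so $B=\{w\}$, $d_u=n-2$, and $|A|=n-2$; by the previous claim $G[A]$ is an induced path $a_1 a_2\cdots a_{n-2}$. First I would pin down $G$ exactly. Claim~\ref{Claim:MainOne}(i) gives $d_A(w)=2$, and Lemma~\ref{Lem:Edge-maximalouterplanar} supplies an embedding of $G$ whose outer face is a Hamilton cycle $C$ with all interior faces triangular. Since $w\not\sim u$, the vertices $u$ and $w$ are not consecutive on $C$, and deleting them from $C$ yields two vertex-disjoint paths in $G[A]$ that jointly cover $A$. But the induced path $G[A]$ can be split into two sub-paths only by removing a single edge, so the two neighbors of $w$ in $A$ must be a consecutive pair $a_k,a_{k+1}$ with $1\le k\le n-3$. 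This determines $G$ up to the choice of $k$.

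The key step is then to sharpen the upper bound on $x_w$. The naive bound $x_w\le 2/\lambda$ (from $\lambda x_w = x_{a_k}+x_{a_{k+1}}$ together with $x_v\le 1$) was enough for $|B|\le 1$, but it is too weak to eliminate $|B|=1$. With the structure above in hand, every vertex $a_i\in A$ has degree at most $4$ in $G$ (adjacent to $u$, to one or two path-neighbors in $A$, and possibly to $w$), so the eigenvalue equation at $a_i$ forces $\lambda x_{a_i}\le\deg_G(a_i)\le 4$, hence $x_{a_i}\le 4/\lambda$. Plugging this into $\lambda x_w = x_{a_k}+x_{a_{k+1}}$ yields the strengthened bound $x_w\le 8/\lambda^2$, so that
$$\sum_{v\in B}d_A(v)x_v \;=\; 2x_w \;\le\; \frac{16}{\lambda^2}.$$

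Substituting $d_u=n-2$ and this bound into (\ref{eqn:1}) gives
$$\lambda^2 \;\le\; (n-2) + 2\lambda - \frac{2}{\tfrac{3}{2}+\sqrt{n-\tfrac{7}{4}}} + \frac{16}{\lambda^2}.$$
Combined with Claim~\ref{Claim:Lowerbound}, which after a short calculation gives $\lambda(\lambda-2)\ge n-1-\tfrac{2}{\sqrt{n}-1}$ and $\lambda^2\ge n+2\sqrt{n}$, the above reduces to a real-variable inequality of roughly the form
$$1-\frac{2}{\sqrt{n}-1}+\frac{2}{\tfrac{3}{2}+\sqrt{n-\tfrac{7}{4}}} \;\le\; \frac{16}{n+2\sqrt{n}},$$
which a direct check shows to fail for every $n\ge 17$ (the left-hand side tends to $1$ while the right-hand side tends to $0$, and already at the boundary $n=17$ one has roughly $0.73>0.63$). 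This contradiction forces $B=\emptyset$.

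The main obstacle is the tightness of this final numerical inequality at $n=17$: without upgrading the bound on $x_w$ from order $1/\lambda$ to order $1/\lambda^2$, one cannot close the gap. That upgrade depends crucially on the degree-at-most-$4$ feature of $A$, which in turn relies on the full structural description established in the first step; beyond that, the computation is essentially parallel to the one already carried out in the proof that $|B|\le 1$.
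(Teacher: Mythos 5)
Your argument is correct, but it takes a genuinely different route from the paper. The paper eliminates $|B|=1$ by a local perturbation: writing $B=\{v\}$ with $N(v)=\{v_i,v_{i+1}\}$, it bounds $x_v<x_s<\frac{1}{\lambda-3}$ and $x_1>\frac{1}{\lambda}$, forms $G'=G-vv_i-vv_{i+1}+vu+vv_1$ (still outerplanar), and shows $\lambda(G')>\lambda(G)$ via the Rayleigh quotient once $\lambda>2+\sqrt{7}$, contradicting extremality of $G$. You instead stay entirely inside the identity (\ref{eqn:1}): every vertex of $A$ has degree at most $4$ (one edge to $u$, at most two along the induced path $G[A]$, at most one to $w$), so $x_a\le 4/\lambda$ for all $a\in A$, whence $x_w\le 8/\lambda^2$ and $\sum_{v\in B}d_A(v)x_v\le 16/\lambda^2$; substituting this second-order bound and $d_u=n-2$ into (\ref{eqn:1}) together with Claim \ref{Claim:Lowerbound} forces $1-\frac{2}{\sqrt{n}-1}+\frac{2}{\frac{3}{2}+\sqrt{n-\frac{7}{4}}}\le \frac{16}{\lambda^2}$, and I confirm this fails for $n=17$ (about $0.73$ versus $0.63$) and a fortiori for larger $n$, since the left side increases and the right side decreases in $n$. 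Two remarks. First, your opening structural paragraph is not actually needed: the degree-at-most-$4$ bound uses only that $G[A]$ is an induced path and $|B|=1$, not that the two neighbours of $w$ are consecutive on that path. Second, that paragraph as written has a small hole: deleting $u$ and $w$ from the Hamilton cycle shows only that $w$'s neighbours are endpoints of the two sub-paths $a_1\cdots a_k$ and $a_{k+1}\cdots a_{n-2}$, and a pair such as $\{a_k,a_{n-2}\}$ is not excluded without the $K_{2,3}$-minor contraction that the paper uses for exactly this purpose; since your computation never invokes consecutiveness, this does not affect correctness. What each approach buys: your switching-free argument is more uniform with the preceding $|B|\ge 2$ analysis and needs no new graph construction, while the paper's perturbation argument avoids delicate numerics at the boundary $n=17$, requiring only $\lambda>2+\sqrt{7}$.
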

\begin{proof}
Suppose that $|B|=1$. Let $B=\{v\}$. Since $G$ is $K_{2,3}$-free, we
have $d(v)=2$. Set $N(v)=\{v_i,v_j\}$. Recall that $G[A]$ is an induced path. Let $G[A]=
v_1v_2\ldots v_{n-2}$. If $|i-j|\neq 1$, then $G$ contains a $K_{2,3}$-minor, a contradiction. Thus, $|i-j|=1$.
Without loss of generality, set $j=i+1$. Since $d_u>5$, at least one vertex of $\{v_i,v_{i+1}\}$
has degree two in $G[A]$. Moreover, $vv_i,vv_{i+1}\in E(G)$.
Let $X=(x_u,x_1,\ldots,x_{n-2},x_v)^t$ be the eigenvector corresponding to $\lambda(G)$, where $x_u=1$,
$x_v$ corresponds to $v$, and $x_k$ corresponds to $v_k$ for $k=1,\ldots,n-2$.
Set $x_s:=\max\{x_k:k=1,\ldots,n-2\}$. By Claim \ref{Claim:Lowerbound},
$\lambda\geq \sqrt{n}+1-\frac{1}{n-\sqrt{n}}\geq 5.$ Then $\lambda x_v=x_i+x_{i+1}\leq 2x_s$, which implies that $x_v<x_s.$
Since $\lambda x_{s}\leq x_u+\sum\limits_{v_k\sim v_{s}}x_k+x_v<1+3x_s$, it follows that $x_{s}<\frac{1}{\lambda-3}.$
Also, since $\lambda x_1>x_u$, we have $x_1>\frac{1}{\lambda}$.

Now let $G':=G-vv_i-vv_{i+1}+vu+vv_1$. Note that $G'$ is also outerplanar and
$\lambda(G')-\lambda(G)\geq 2X^t(A(G')-A(G))X=2x_v(x_u+x_{1}-x_i-x_{i+1})>2x_v(1+\frac{1}{\lambda}-\frac{2}{\lambda-3})$.
By simple algebra, $1+\frac{1}{\lambda}-\frac{2}{\lambda-3}=\frac{\lambda^2-4\lambda-3}{\lambda(\lambda-3)}$.
In order to prove the inequality $\frac{\lambda^2-4\lambda-3}{\lambda(\lambda-3)}>0$,
it suffices to show $\lambda>2+\sqrt{7}$. By Claim \ref{Claim:Lowerbound},$\lambda>2+\sqrt{7}$
when $n\geq 16$. Therefore $\lambda(G')>\lambda(G)$, a contradiction. This
proves the claim.
\end{proof}
It follows that $G=K_1\vee P_{n-1}$, completing the proof.  $\hfill\blacksquare$

\vspace{0.2cm}
\noindent
{\bf The case of $2\leq n\leq16$.}
\vspace{0.2cm}

Throughout this part, we use the notation NIM-outerplanar graphs instead of non-isomorphic
maximal outerplanar graphs.

Let $G$ be a maximal outerplanar graph with order $n$ and vertex set $V(G)=\{v_1,\ldots,v_n\}$.
By Lemma \ref{Lem:Edge-maximalouterplanar}, $G$ has a planar embedding, say $\widetilde{G}$,
such that the outer-face is a Hamilton cycle. One can easily find such a Hamilton cycle
is unique, since otherwise there is a $K_4$-minor in $G$. In the following, we do not
distinguish a maximal outerplanar graph and its planar embedding when there is no
ambiguity. Let $v_1\ldots v_nv_1$ be the Hamilton cycle mentioned above.

A property of a maximal outerplanar graph on $n\geq 3$ vertices states that every such graph has a vertex
of degree 2 and has a subgraph which is also maximal outerplanar by deleting the vertex.
Let $G'$ be a maximal outerplanar graph obtained from $G$ by adding a new vertex $v_{n+1}$,
which is adjacent to some vertex(vertices) of $G$. Since
$e(G')=2(n+1)-3=e(G)+2$, we have $d_{G'}(v_{n+1}) = 2$. Furthermore,
$N_{G'}(v_{n+1})=\{v_i,v_{i+1}\}$ for some $i\in [1,n]$. Let $S(n)$ denote the
number of NIM-outerplanar graphs with order $n$. It follows that $S(n+1)\leq nS(n)$. In
fact, by using matlab, the number of NIM-outerplanar graphs with order at most 13 can
be computed as shown in Table 1. In particular, $S(14)\leq 29666$, $S(15)\leq 415324$
and $S(16)\leq6229860$.

\begin{table}[H]
\begin{tabular}{|c|c|c|c|c|c|c|c|c|c|c|c|}
\hline$n$ & 6 & 7 & 8 & 9 & 10 & 11 & 12 & 13 & 14 & 15 & 16 \\
\hline$S(n)$ & 3 & 4 & 12 & 27 & 82 & 228 & 733 & 2282 & $\leq 29666$ & $\leq 415324$ & $\leq 6229860$ \\
\hline
\end{tabular}
\caption{The numbers of NIM-outerplanar graphs with order at least 6 and at most 16.}
\label{tab-1}
\end{table}

For the case of $2\leq n\leq5$, one can check by hand calculation. For the case of
$6\leq n\leq13$, the problem can be completely solved by a computer. When $14\leq n\leq16$,
we first develop a program to determine all NIM-outerplanar graphs of order 13, and then
compare the spectral radius of each graph in the family of no more than
$29666~(415324,6229860)$ graphs and of $K_1\vee P_{n-1}$, respectively (see https://github.com/HuiqiuLin-83/Outerplanar-graph/ for the code). Furthermore,
we find out $\lambda(G_1)=3.2361>\lambda(K_1\vee P_5)=3.2227$.
In summary, we get the following result (together with Theorem \ref{Thm-Main}).

\begin{thm}
Among all outerplanar graphs on $n$ vertices, $K_1\vee P_{n-1}$ attains the maximum
spectral radius, with the only exceptional case of $n=6$, in which $G_1$ attains the maximum
spectral radius. (see Figure 1).
\end{thm}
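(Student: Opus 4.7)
The plan is to split the proof into three regimes and lean on the already-proved Theorem~\ref{Thm-Main} for the asymptotic part. For $n\ge 17$ the statement is exactly Theorem~\ref{Thm-Main}, so nothing remains. For $n\in\{2,3,4,5\}$ the family of outerplanar graphs is small enough to enumerate by hand: for $n=2,3$ there is nothing to check, and for $n=4,5$ one lists the handful of maximal outerplanar graphs and verifies directly that $K_1\vee P_{n-1}$ dominates. The real work is therefore the range $6\le n\le 16$, which I would handle by a structured computer search over NIM-outerplanar graphs.

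Two reductions set up the search. First, since adding any edge to a connected graph strictly increases the spectral radius, an extremal graph must be edge-maximal, so the search may be restricted to maximal outerplanar graphs. Second, Lemma~\ref{Lem:Edge-maximalouterplanar} says every such graph has a unique Hamilton outer face and triangular inner faces; this forces a degree-$2$ vertex whose removal yields a smaller maximal outerplanar graph, and conversely every NIM-outerplanar graph on $n+1$ vertices is obtained from one on $n$ vertices by inserting a single new degree-$2$ vertex adjacent to two consecutive vertices of the Hamilton cycle. This gives both the recursion $S(n+1)\le n\cdot S(n)$ and a constructive enumeration tree.

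The algorithm then starts from a precomputed list at a small base order (say $n_0=6$, for which $S(6)=3$) and repeatedly extends each stored graph in all possible ways dictated by the previous paragraph, applying an isomorphism filter (using a canonical labelling that exploits the uniqueness of the Hamilton cycle, which reduces the symmetry group to a dihedral action on insertion positions) to keep one representative per class. For each surviving graph at the desired order one computes $\lambda$ numerically and compares with $\lambda(K_1\vee P_{n-1})$. The exceptional case at $n=6$ surfaces from this comparison: the graph $G_1$ of Figure~1 gives $\lambda(G_1)\approx 3.2361>3.2227\approx\lambda(K_1\vee P_5)$, while for every other $n$ in the range no rival beats $K_1\vee P_{n-1}$.

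The expected main obstacle is the combinatorial explosion: the crude bound $S(16)\le 6{,}229{,}860$ shows that naive breadth-first generation without aggressive isomorphism pruning is infeasible, so success depends on either an efficient canonical form tailored to maximal outerplanar graphs or the use of a dedicated isomorphism package. A secondary concern is numerical accuracy: comparing $\lambda(G)$ against $\lambda(K_1\vee P_{n-1})$ requires enough precision that close cases cannot be misdecided, though the explicit lower bound $\lambda(K_1\vee P_{n-1})$ close to $1+\sqrt{n}$ coming from Claim~\ref{Claim:Lowerbound} and the upper bound from Lemma~\ref{Lem:ShuHong} indicate the spread is safe. Once these two issues are handled, the theorem follows by combining Theorem~\ref{Thm-Main}, the hand checks for $n\le 5$, and the verified computer output for $6\le n\le 16$.
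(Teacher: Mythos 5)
Your proposal matches the paper's argument essentially step for step: invoke Theorem~\ref{Thm-Main} for $n\ge 17$, check $n\le 5$ by hand, and for $6\le n\le 16$ restrict to maximal outerplanar graphs and enumerate them recursively via the degree-$2$-vertex insertion that gives $S(n+1)\le n\,S(n)$, comparing spectral radii by computer and isolating $G_1$ at $n=6$. The only divergence is an implementation detail (the paper enumerates exactly up to order $13$ and then tolerates duplicates for $14\le n\le 16$ rather than filtering isomorphs), so there is nothing substantive to add.
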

\begin{figure}[H]
\centering
\includegraphics[width=4cm]{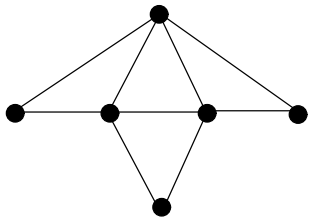}
\caption{The graph $G_1$.}
\label{fig1}
\end{figure}
\section*{Concluding remarks}

We would like to compare our proof with the methods applied by Tait and Tobin in \cite{TT17}.
Generality speaking, the Tait-Tobin Method is somewhat motivated by Regularity Lemma.
For any $\varepsilon>0$, they partitioned the vertex set $V(G)$ into two parts $V_L=\{v\in V(G): x_v>\varepsilon\}$
and $V_S=V(G)\backslash V_L$. ``This enables them to determine the structure on large (linear
sized) pieces of the graph to understand approximately what the extremal graph looks
like."\footnote{This sentence is essentially borrowed from an email from M. Tait to the authors,
who explained the ideas on the Tait-Tobin Method to them.} Our proof heavily relies
on the complete characterization of outerplanar graphs, and
the concept of ``minor" plays an important role in our proof. More importantly for us, we
need a Hamilton cycle in a maximal outerplanar graph to label the vertices in order. We
need much more details (see Claims 4 and 5) to control the bound of eigenvector entries,
besides using Shu-Hong's inequality.

On the other hand, the Tait-Tobin Method seems to be powerful for many problems
(on large graphs) in spectral graph theory. Till now, it has been successfully used to make progress
on Cvetkovi\'{c}-Rowlinson Conjecture \cite{CR90}, Boots-Royle-Cao-Vince Conjecture \cite{BR91,CV93},
and Cioab\u{a}-Gregory Conjecture \cite{CG07}, etc. We would like to refer to the very recent
spectral version \cite{CFTZ20} of extremal numbers of friendship graphs \cite{EFGG95}.

In closing, we shall mention the following conjecture again, which is still open for
small $n$. (For example, let us confirm this conjecture for all $n\geq 100$.) We note that Tait
and Tobin \cite{TT17} verified it for sufficiently large $n$.
\begin{conj}[Boots-Royle \cite{BR91}, and independently by Cao-Vince \cite{CV93}]
The planar graph on $n\geq 9$ vertices of maximum spectral radius is $P_2\vee P_{n-2}$.
\end{conj}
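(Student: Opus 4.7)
\emph{Proof proposal.} The plan is to imitate the two--pronged strategy of this paper (Tait--Tobin style theoretical argument for large $n$, plus computer enumeration for small $n$), adapted to the planar setting. The conjecture is already known for sufficiently large $n$ by \cite{TT17}; the realistic target is to push the theoretical threshold down to some explicit value $n_0$ (the paper suggests around $100$) and then verify $9\leq n<n_0$ by enumerating triangulations and comparing spectral radii directly, analogous to Table~\ref{tab-1}. Throughout I would use three planar analogues of the tools exploited here: Euler's inequality $e(G)\leq 3n-6$, Wagner's characterization (planar iff $K_5$-- and $K_{3,3}$--minor free), and the Ellingham--Zha bound $\lambda(G)\leq 2+\sqrt{2n-6}$ in place of the Shu--Hong bound.

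\emph{Step 1: setup and two dominant vertices.} Let $G$ be an extremal planar graph and $\lambda:=\lambda(G)$. Since adding an edge strictly increases the spectral radius, $G$ is a triangulation with $e(G)=3n-6$. A sharp lower bound on $\lambda$ comes from applying the Rayleigh principle to $P_2\vee P_{n-2}$ via its natural quotient matrix, giving $\lambda\geq 1+\sqrt{2n}-o(1)$ (the planar analogue of Claim~\ref{Claim:Lowerbound}). Unlike the outerplanar case, which produced a single high--weight vertex, the target extremal graph has \emph{two} such vertices. Let $u$ be the vertex with normalized Perron entry $x_u=1$, and let $w$ be the neighbor of $u$ maximizing $x_w$. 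Using the Ellingham--Zha bound and the identity $\lambda^2 x_u=d_u+\sum_{y\sim u}\sum_{z\sim y}x_z$, force $x_w$ to be close to $1$ and both $d(u),d(w)$ to be close to $n-1$.

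\emph{Step 2: structural reduction via forbidden minors.} Partition $V(G)\setminus\{u,w\}$ into $A_{uw}:=N(u)\cap N(w)$, $A_u:=N(u)\setminus N(w)$, $A_w:=N(w)\setminus N(u)$, and $B:=V(G)\setminus(\{u,w\}\cup N(u)\cup N(w))$. The $K_{3,3}$--minor--free condition (the planar analogue of the $K_{2,3}$ argument in Claim~\ref{Claim:MainOne}(i)) bounds how any component of $G[A_u\cup A_w\cup B]$ can attach to the pair $\{u,w\}$, while $K_5$--minor--freeness prevents overly dense substructures inside $A_{uw}$. Combined with a two--vertex analogue of inequality~(\ref{eqn:1}) (carrying a second main term from $w$) and the analogue of inequality~(\ref{eqn:3}) for the sum over $B$, these constraints should force $B=\emptyset$ and then $A_u=A_w=\emptyset$, i.e.\ both $u$ and $w$ are universal. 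An edge--switching argument parallel to Claim~5 of this paper then rearranges any remaining obstruction.

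\emph{Step 3: identifying the path and finishing.} Once $u\sim w$ and both are universal, the triangulation count $e(G)=3n-6$ leaves exactly $n-3$ edges among the remaining $n-2$ vertices; since every inner face is a triangle and the cyclic order of $N(u)$ in the planar embedding is forced by the $u$--star, this residual subgraph must be a Hamiltonian path on $V(G)\setminus\{u,w\}$, yielding $G=P_2\vee P_{n-2}$. For $9\leq n<n_0$ the remaining cases are handled computationally: generate all non--isomorphic maximal planar graphs up to $n_0$ vertices (these are well--tabulated) and compare $\lambda(G)$ with $\lambda(P_2\vee P_{n-2})$, repeating the pipeline used in this paper for the outerplanar case.

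\emph{Main obstacle.} The central difficulty is the \emph{two--vertex} nature of the extremal graph. In the outerplanar setting one dominant vertex $u$ reduced the problem to understanding $G[N(u)]$, which had to be a disjoint union of paths; here one must simultaneously control $N(u)\cap N(w)$ and prevent vertices attached to only one of $u,w$, while the weaker Ellingham--Zha bound provides looser eigenvector control than Shu--Hong. Making the threshold $n_0$ small enough that the residual enumeration is tractable will require a genuinely two--vertex analogue of Claims~\ref{Claim:Approxstructure}--\ref{Claim:MainOne}, with $K_5$ and $K_{3,3}$ minor arguments carrying substantially more of the load than $K_4$ and $K_{2,3}$ did in the outerplanar proof.
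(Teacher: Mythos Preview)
The paper does not prove this statement: it is presented in the concluding remarks as an \emph{open conjecture}, with the explicit comment that it ``is still open for small $n$'' and the invitation ``let us confirm this conjecture for all $n\geq 100$.'' There is therefore no proof in the paper to compare your proposal against.

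Your proposal is a sensible research outline, and the analogies you draw (Ellingham--Zha in place of Shu--Hong, $K_5$/$K_{3,3}$ in place of $K_4$/$K_{2,3}$, triangulations in place of maximal outerplanar graphs, two dominant vertices in place of one) are exactly the right dictionary. But as written it is a plan, not a proof, and you yourself identify the genuine gap: the two--vertex structure of $P_2\vee P_{n-2}$ means there is no direct analogue of Claims~\ref{Claim:Approxstructure}--\ref{Claim:MainOne}. In the outerplanar argument the single inequality~(\ref{eqn:1}) together with the tight edge count of Claim~\ref{Claim:MainOne}(ii) forced $d_u>n-1$ at a concrete threshold; in the planar case you would need to control $d_u$ and $d_w$ \emph{simultaneously}, and the interaction terms between $A_u$, $A_w$, and $A_{uw}$ do not decouple cleanly. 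Tait and Tobin's published argument for large $n$ handles this, but with constants that are not tracked explicitly, so the step ``push the threshold down to some explicit $n_0$'' is where the actual new mathematics is required. Your Step~2 asserts that the minor constraints ``should force $B=\emptyset$ and then $A_u=A_w=\emptyset$,'' but no mechanism is given for the second conclusion, and that is precisely the unresolved part of the problem.
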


\section*{Acknowledgment}
The authors are grateful to Jun Ge for helpful comments on the original draft, and to
Xueyi Huang for providing help for the code. They are also grateful to Michael Tait for
discussions on the Tait-Tobin Method.


\begin{thebibliography}{10}
\bibitem{BM08}
J.A. Bondy, U.S.R. Murty, Graph theory. Graduate Texts in Mathematics, 244.
Springer, New York, 2008. xii+651 pp. ISBN: 978-1-84628-969-9.

\bibitem{BR91}
B.N. Boots, G.F. Royle, A conjecture on the maximum value of the principal eigenvalue
of a planar graph, \emph{Geogr. Anal.} {\bf 23} (3) (1991), 276--282.

\bibitem{CV93}
D. Cao, A. Vince, The spectral radius of a planar graph, \emph{Linear Algebra Appl.} {\bf 187}
(1993), 251--257.

\bibitem{CFTZ20}
S.M. Cioab\u{a}, L. Feng, M. Tait, X. Zhang, The maximum spectral radius of graphs without
friendship subgraphs, \emph{Electron. J. Combin.} {\bf 27} (2020) no. 4, P4.22.

\bibitem{CG07}
S.M. Cioab\u{a}, D.A. Gregory, Principal eigenvectors of irregular graphs, \emph{Electron. J.
Linear Algebra} {\bf 16} (2007), 366--379.

\bibitem{CR90}
D. Cvetkovi\'{c}, P. Rowlinson, The largest eigenvalue of a graph: a survey, \emph{Linear
Multilinear Algebra} 28(1-2) (1990), 3--33.

\bibitem{EZ00}
M N. Ellingham, X. Zha, The spectral radius of graphs on surfaces, \emph{J. Combin.
Theory, Ser. B} 78 (1) (2000), 45--56.

\bibitem{EFGG95}
P. Erd\H{o}s, Z. F\"{u}redi, R.J. Gould, D.S. Gunderson, Extremal graphs for intersecting
triangles, \emph{J. Combin. Theory, Ser. B} {\bf 64} (1995), 89--100.

\bibitem{FMS93}
O. Favaron, M. Mah\'{e}o, J-F. Sacl\'{e}, Some eigenvalue properties in graphs (conjectures
of Graffiti II), \emph{Discrete Math.} {\bf 111} (1) (1993), 197--220.

\bibitem{H88}
Y. Hong, A bound on the spectral radius of graphs, \emph{Linear Algebra Appl.} {\bf 108} (1988),
135--139.

\bibitem{H95}
Y. Hong, On the spectral radius and the genus of graphs, \emph{J. Combin. Theory, Ser. B} {\bf
65} (2) (1995), 262--268.

\bibitem{R90}
P. Rowlinson, On the index of certain outerplanar graphs, \emph{Ars Combin.} {\bf 29} (1990),
221--225.

\bibitem{SW78}
A. J. Schwenk, R. J. Wilson, On the eigenvalues of a graph, in: Lowell W.
Beineke, Robin J. Wilson (Eds.), Selected Topics in Graph Theory, Academic Press,
London, 1978, pp. 307--336, chapter 11.

\bibitem{SH00}
J. Shu, Y. Hong, Upper bounds for the spectral radii of outerplanar graphs and Halin
graphs (in Chinese), \emph{Chinese Ann. Math. Ser. A} {\bf 21} (2000), no. 6, 677--682.

\bibitem{T19}
M. Tait, The Colin de Verdi\`{e}re parameter, excluded minors, and the spectral radius,
\emph{J. Combin. Theory, Ser. A} {\bf 166} (2019), 42--58.

\bibitem{TT17}
M. Tait, J. Tobin, Three conjectures in extremal spectral graph theory, \emph{J. Combin.
Theory, Ser. B} {\bf 126} (2017), 137--161.

\bibitem{TT18}
M. Tait, J. Tobin, Characterizing graphs of maximum principal ratio, \emph{Electron. J.
Linear Algebra} {\bf 34} (2018), 61--70.

\bibitem{YKLS19}
Z. Yu, L. Kang, L. Liu, E. Shan, The extremal $\alpha$-index of outerplanar and planar
graphs, \emph{Appl. Math. Comput.} {\bf 343} (2019), 90--99.


\end{thebibliography}
\end{document}